\newtheorem{thm}{Theorem}
\newtheorem{lem}{Lemma}[section]
\newtheorem{rmk}{Remark}[section]
\numberwithin{equation}{section}
\newcommand{\va}{\varphi}
\newcommand{\NZ}{\mathbb{N} \cup \{0\}}
\newcommand{\N}{\mathbb{N}}
\newcommand{\ep}{\varepsilon}
\title[Coefficient Inverse Problem in Fractional Diffusion Equation]
{Simultaneous uniqueness for a coefficient inverse problem in
one-dimensional fractional diffusion equation from an interior point
measurement}
\author[Jing, Li and Yamamoto]{Xiaohua Jing$^1$,\ Zhiyuan Li$^{2}$ and Masahiro Yamamoto$^{3,4}$}
\thanks{
\hspace{-1.4em}$^1\;$School of Sciences, Chang'an University, Xi'an, 710064,
Shaanxi, China E-mail: {\tt xhjing@chd.edu.cn}\\
$^2\;$School of Mathematics and Statistics, Ningbo University, Ningbo 315211,
Zhejiang,
China. E-mail: {\tt lizhiyuan@nbu.edu.cn}\\
$^3\;$Graduate School of Mathematical Sciences, The University
of Tokyo, Komaba, Meguro, Tokyo 153-8914, Japan, \\
$^4$Department of Mathematics, Faculty of Science, Zonguldak B\"ulent Ecevit
University,
Zonguldak 67100, Turkiye.
E-mail: myama@ms.u-tokyo.ac.jp }
\begin{document}
\maketitle

\begin{abstract}
This article is concerned with an inverse problem of simultaneously
determining a spatially varying coefficient and a Robin coefficient
for a one-dimensional fractional diffusion equation with a time-fractional
derivative of order $\alpha\in(0,1)$.  We prove the
uniqueness for the inverse problem by observation data at one interior point
over a finite time interval, provided that a coefficient
is known on a subinterval.
Our proof is based on the uniqueness in the inverse spectracl problem
for a Sturm-Liouville problem by means of the Weyl $m$-function and
the spectral representation of the solution to
an initial-boundary value problem for the fractional diffusion equation.
\end{abstract}

{\bf Keywords:} {coefficient inverse problem, fractional diffusion equation,
simultaneous uniqueness }

{\bf 2020 Mathematics Subject Classifications}. {35R30, 35R11}

%\maketitle

%%------------------------------------------------------------------------------------------------------------------------------
\section{Introduction}
Due to the non-locality of the fractional derivative, the fractional diffusion
equation performs well in modeling anomalous phenomena which are characterized
by memory inheritance and path dependence. Fractional diffusion equations have
attracted great attention because of their widespread practical applications
in many
aspects, such as physics \cite{Metzler2000random},
ecology \cite{Brockmann2006scaling}, geological exploration
\cite{Zhou2015adaptive}, complex viscoelastic materials
\cite{Giona1992fractional} and so on.

Let $T>0$ and $h,H \ge 0$.
We consider the following initial-boundary value problem:
\begin{equation}\label{eq-gov}
\left\{
\begin{alignedat}{2}
& d_{t}^{\alpha}u(x,t) = \partial_{x}^{2}u(x,t) + q(x)u(x,t) &\quad&
\mbox{in }(0,1)\times(0,T), \\
& \partial_x u(0,t) - hu(0,t) = 0 &\quad&   \mbox{in } (0,T), \\
& \partial_x u(0,t) + Hu(1,t) = \eta(t) &\quad&   \mbox{in } (0,T), \\
& u(x,0) = 0 &\quad& \mbox{in }(0,1),
\end{alignedat}
\right.
\end{equation}
where $\alpha\in(0,1)$ is a parameter specifying the
large-time behavior of the waiting-time distribution function
(e.g. \cite{Podlubny1998fractional}), and $d_{t}^{\alpha}u$ denotes
the Caputo derivative of order $\alpha\in(0,1)$, which can be defined for
an absolutely continuous function $u\in AC[0, T]$:
$$
d_{t}^{\alpha}u(t)=\frac{1}{\Gamma(1-\alpha)}\int ^{t}_{0}
(t-s)^{-\alpha}u'(s)ds, \quad 0<t<T.
$$
The coefficient $q(x)$ is called a potential, and describes
a spatially dependent source or sink term in a bar, while
Robin coefficients $h, H$ are important physical parameters governing
the convection between the solute in a body and one in the ambient environment
\cite{WRY}.
Thus the potential and the Robin coefficients characterize thermal properties
of the conductive materials in the interior and on the boundary points, and
are of significant practical interest in thermal engineering
e.g., corrosion \cite{JZ09}, and nondestructive evaluation \cite{Beck85}.

Once $\alpha$, $q$, $h$, and $H$ are known, solving the initial-boundary value
problem with given parameters is called a forward problem.
However, several parameters in practical
applications among initial data, a coefficient and a source term are often
unknown and
cannot be measured directly in the model, and these unknown quantities need
to be determined by available data of the solution to the forward problem,
and this is an inverse problem.

Many works have been done on the mathematical analysis of inverse problems
for time-fractional diffusion equations.
As for inverse source for time-fractional equations, there are many
works and here we are restricted to
\cite{Jin2012inverse,liu2016strong,wei2016inverse, zhang2011inverse,ZL21},
\cite{LY}, \cite{THNZ}, \cite{Cheng2009uniqueness, KLLY20, Li2013simultaneous,
MR3465303}.

In particular, for the inverse problems of determining potentials of
fractional diffusion equations with non-homogeneous boundary conditions,
there are few articles and we refer to
\cite{Jin2021recovering, Jing2020simultaneous, Rundell2018recovery,
Rundell2021uniqueness}.

It reveals that related to non-homogeneous boundary conditions,
the above mentioned works consider only boundary measurements.
However, in practical cases such as pollution problems,
the spatial domain is vast and the access to the boundary is not
realistic, so that boundary observation data are difficult and interior
data are more available.
This is the main motivation for the current article.

As for the inverse problems for one-dimensional diffusion equations
(i.e., $\alpha=1$) and the Sturm-Liouville problems by
the interior data at one point, we can refer to  \cite{pierce1979unique}.

In general, the uniqueness by interior data is much more difficult
than the boundary data, and we can prove the uniqueness for potentials
only in a subinterval in $(0,1)$.
Therefore, we will discuss the uniqueness for our
inverse problem under assumption that potentials under consideration
are known in a fixed subinterval.

Thus we formuate our main subject as
\\
{\bf Inverse problem by interior data}
{\it
Let $T>0$, $d\in (0,1)$.
Let $H$ and $\eta(t)$, $0<t<T$, be given and let $x_0\in (0,1)$ be
a fixed point.
We assume that $q$ is known in a subinterval $(d,1)\subset(0,1)$.
Then, we will discuss the simultaneous determination of $q(x)$, $x\in[0,1]$
and $h$ from observation data $u(x_0,\cdot)$ in $(0,T)$.
}

\section{Main results and outline}

In this article, we discuss our inverse problem associated with a weaker
class of solutions to the problem \eqref{eq-gov} in a suitable Sobolev space
in time.
First we shall introduce function spaces and notations which are needed
subsequently.
We define the fractional Sobolev space $H^{\alpha}(0,T)$
with the norm in $H^{\alpha}(0,T)$:
$$
\|u\|_{H^{\alpha}(0,T)} := \left(  \|u\|^2_{L^{2}(0,T)}
+ \int_0^T \int_0^T \frac{|u(t)-u(s)|^2}{|t-s|^{1+2\alpha}}dtds
      \right)^{\frac{1}{2}}
$$
(see e.g., \cite{Adams1975sobolev}).
Furthermore, we define the Banach spaces
$$
H_{\alpha}(0,T):=
\left\{ \begin{aligned}
& \{ u \in H^{\alpha}(0,T); u(0)=0\}, \quad \alpha\in(1/2, 1), \\
& \left\{v \in H^{\frac{1}{2}}(0,T); \int^T_0 \frac{|v(t)|^2}{t}dt < \infty
\right\}, \quad \alpha = \frac{1}{2}, \\
& H^{\alpha}(0,T), \quad \alpha\in(0, 1/2),
\end{aligned}\right.
$$
endowed with the norm
$$
\|v\|_{H_{\alpha}(0,T)}:=
\left\{ \begin{aligned}
& \|v\|_{H^{\alpha}(0,T)}, \quad \quad \quad \quad \quad \quad  \quad \quad
\  \quad \alpha \in (0,1 ), \ \alpha \neq \frac{1}{2}, \\
& \left( \|v\|^2_{H^{\frac{1}{2}}(0,T)} + \int^T_0 \frac{|v(t)|^2}{t}dt
\right)^{\frac{1}{2}}, \ \  \alpha = \frac{1}{2}.
\end{aligned}\right.
$$
We introduce the Riemann-Liouville fractional integral operator $J^\alpha$,
$0<\alpha<1$ (see e.g., \cite{Podlubny1998fractional})
$$
J^{\alpha}g(t) = \frac{1}{\Gamma(\alpha)} \int_0^t (t-s)^{\alpha-1}g(s)ds,
\quad 0<t<T,
$$
and we see that $J^\alpha$ is invertible, and we define the time fractional
derivative $\partial_{t}^{\alpha}$ in $H_{\alpha}(0,T)$ by
$$
\partial_{t}^{\alpha}g =(J^{\alpha})^{-1}g, \quad g\in H_{\alpha}(0,T)
=J^{\alpha}L^2(0,T)
$$
(e.g., Gorenflo, Luchko and Yamamoto \cite{GLY15},
Kubica, Ryszewska and Yamamoto \cite{Kubica2020introduction}).
Then we can verify that $\partial_{t}^{\alpha}$ is an extension of the
Caputo derivative $d_{t}^{\alpha}$ to $H_{\alpha}(0,T)$, that is,
$\partial_t^\alpha u = d_t^\alpha u$ in the case when $u\in AC[0,T]$
and $u(0)=0$, and see the details in
\cite{Kubica2020introduction}.

In place of \eqref{eq-gov}, we introduce
the following initial-boundary value problem for the
time-fractional diffusion equation:
\begin{equation}\label{eq-gov'}
\left\{
\begin{alignedat}{2}
& \partial_{t}^{\alpha}u(x,t) = \partial_{x}^{2}u(x,t) + q(x)u(x,t)
&\quad& \mbox{in } L^2(0,T;L^2(0,1)), \\
& \partial_{x}u(0,t) - hu(0,t)=0,   &\quad &\\
&\partial_{x}u(1,t) + H u(1,t)= \eta(t) &\quad& \mbox{in }L^2(0,T),\\
& u \in H_{\alpha}(0,T; L^2(0,1)).
\end{alignedat}
\right.
\end{equation}

This generalizes the problem \eqref{eq-gov} and formulates
the problem for a time-fractional diffusion equation
in the space $L^2(0,T;(0,1))$,
which admits a unique solution in the class $H_{\alpha}(0,T;L^2(0,1))$
for given boundary data $\eta\in H_{\alpha}(0,T)$
(\cite{Rundell2021uniqueness}).
Other choice of function spaces for boundary data and solutions
is possible, but our formulation is consistent within the Sobolev
spaces $H_{\alpha}(0,T)$ as the time regularity both for data and solutions.

Here we interpret $u \in H_{\alpha}(0,T; L^2(0,1))$ in \eqref{eq-gov'} as
the zero initial condition because we can verify that
$u \in H_{\alpha}(0,T;L^2(0,1))$ implies
$u(\cdot,0) =  0$ in the trace sense of $H^{\alpha}(0,T)$ if
$\frac{1}{2} < \alpha < 1$.
For the details, we refer also to \cite{Kubica2020introduction}.

Moreover we introduce an admissible set of potentials and Robin coefficients:
\begin{equation}\label{def-Ad}
\mathcal A := \{ (q,h,H);\ q\in C[0,1], \ -q, h, H \geq 0\}.
\end{equation}
The main purpose of this paper is to study the uniqueness in the
identification of coefficients $q$ and $h$ from an interior one point
measurement for \eqref{eq-gov'}.

Now we are ready to state our first main results.
\begin{thm}\label{thm-unique}
Let $d\in (0,1)$ and $x_0 \in (0,1)$ be given.
Let $u_j(x, t) $ be the solutions of \eqref{eq-gov'} with $(q_j,h_j,H)
\in\mathcal A$, $j=1, 2$.
We assume that $\eta\in H_{\alpha}(0,T)$
does not vanish identically and $q_{1}(x)=q_{2}(x)$, $x\in (d,1)$.
Suppose that
\begin{align*}
& (d,x_0) \in
\{ (d,x_0) \in (0,1)^2; \, 0< d \le x_0 \le 1\} \\
\cup &
\left\{ (d,x_0) \in (0,1)^2; \, 0\le x_0 < \min\{ d, -2d+1\}, \,\,
0<d<\frac{1}{2} \right\}.
\end{align*}
Then $u_{1}(x_{0},t) = u_{2}(x_{0},t)$, $0<t<T$, implies
$$
q_{1}(x)=q_{2}(x),  \ \ x\in [0,1], \quad and \quad h_1=h_2.
$$
\end{thm}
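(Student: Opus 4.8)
The plan is to pass from the time-fractional evolution in \eqref{eq-gov'} to a family of Sturm--Liouville problems via the Laplace transform in $t$, and then to invoke inverse spectral theory through the Weyl $m$-function. Writing $\widehat{u}_j(x,p)=\int_0^\infty e^{-pt}u_j(x,t)\,dt$ and using the zero initial condition built into $H_{\alpha}(0,T;L^2(0,1))$, the Caputo symbol $\partial_t^\alpha$ becomes multiplication by $p^\alpha$, so with $\lambda:=p^\alpha$ each $\widehat{u}_j(\cdot,p)$ solves $-w''-q_jw+\lambda w=0$ on $(0,1)$ with $w'(0)-h_jw(0)=0$ and $w'(1)+Hw(1)=\widehat{\eta}(p)$. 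Let $\psi_j(\cdot,\lambda)$ be the solution of the equation satisfying the left condition, normalized by $\psi_j(0,\lambda)=1$, $\psi_j'(0,\lambda)=h_j$, and set $N_j(\lambda):=\psi_j'(1,\lambda)+H\psi_j(1,\lambda)$, so that $\widehat{u}_j(x,p)=\widehat{\eta}(p)\,\psi_j(x,\lambda)/N_j(\lambda)$. Since $\eta\not\equiv0$ and $u_j(x_0,\cdot)$ is a convolution of $\eta$ with a kernel that is analytic in $t>0$, a Titchmarsh-convolution/time-analyticity argument propagates the hypothesis $u_1(x_0,\cdot)=u_2(x_0,\cdot)$ from $(0,T)$ to all $t>0$; dividing by $\widehat\eta(p)$ then gives
\begin{equation}
\frac{\psi_1(x_0,\lambda)}{N_1(\lambda)}=\frac{\psi_2(x_0,\lambda)}{N_2(\lambda)}
\label{eq-LT}
\end{equation}
first for $\lambda=p^\alpha$ in a sector, and then, by analyticity in $\lambda$, for all $\lambda\in\mathbb{C}$.

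Next I would exploit $q_1=q_2$ on $(d,1)$. Let $\Phi(\cdot,\lambda)$ denote the solution of the common equation on $(d,1)$ satisfying the right condition, normalized by $\Phi(1,\lambda)=1$, $\Phi'(1,\lambda)=-H$; because both the potential on $(d,1)$ and $H$ are shared, $\Phi$ is the same for $j=1,2$. The constant Wronskian $W[\psi_j,\Phi]=\psi_j\Phi'-\psi_j'\Phi$ evaluated at $x=1$ equals $-N_j(\lambda)$. In Case 1, where $d\le x_0\le1$, I evaluate this Wronskian at $x=x_0\in[d,1]$, where $\Phi$ is available, to get $N_j(\lambda)=\psi_j'(x_0,\lambda)\Phi(x_0,\lambda)-\psi_j(x_0,\lambda)\Phi'(x_0,\lambda)$; substituting into \eqref{eq-LT} and cancelling the common factor $\Phi(x_0,\lambda)$ reduces it to equality of the left Weyl $m$-functions, $\psi_1'(x_0,\lambda)/\psi_1(x_0,\lambda)=\psi_2'(x_0,\lambda)/\psi_2(x_0,\lambda)$ for all $\lambda$. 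By the Borg--Marchenko uniqueness theorem this $m$-function determines the potential on $[0,x_0]$ together with the boundary parameter, so $q_1=q_2$ on $[0,x_0]$ and $h_1=h_2$; combined with $q_1=q_2$ on $[d,1]$ and $x_0\ge d$, this yields $q_1=q_2$ on $[0,1]$.

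The main difficulty is Case 2, where $0\le x_0<\min\{d,1-2d\}$ and the observation point lies strictly to the left of $d$. Here $x_0$ no longer decouples the known and unknown parts, because transporting the right Weyl function from $d$ down to $x_0$ would require $q_j$ on $(x_0,d)$, which is unknown. Evaluating the Wronskian at $x=d$ I can still write $N_j(\lambda)=\psi_j(d,\lambda)\Phi(d,\lambda)\big(m_j(d,\lambda)-m_\Phi(\lambda)\big)$, with $m_j(d,\lambda):=\psi_j'(d,\lambda)/\psi_j(d,\lambda)$ and the common, known $m_\Phi(\lambda):=\Phi'(d,\lambda)/\Phi(d,\lambda)$, so that \eqref{eq-LT} becomes a single scalar relation tying the ratios $\psi_j(x_0,\lambda)/\psi_j(d,\lambda)$ and the left $m$-functions $m_j(d,\cdot)$ against $m_\Phi$. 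My plan is to analyze the large-$\lambda$ asymptotics of this relation, in which the exponential factors $e^{\sqrt{\lambda}\,x_0}$, $e^{\sqrt{\lambda}\,d}$ and $e^{\sqrt{\lambda}}$ compete: the constraint $x_0<1-2d$ is precisely the reflection/Hochstadt--Lieberman threshold guaranteeing that the measured function controls enough spectral data to force $m_1(d,\cdot)=m_2(d,\cdot)$, after which Borg--Marchenko on $[0,d]$ gives $q_1=q_2$ on $[0,d]$ and $h_1=h_2$, and hence on all of $[0,1]$.

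I expect the real work to lie in this asymptotic/reflection step rather than in the formal Laplace reduction. The delicate points are verifying that the measurement equality genuinely transfers to \eqref{eq-LT} on all of $(0,\infty)$ despite the low time regularity of $\eta$, and, in Case 2, extracting equality of the two left $m$-functions from the single meromorphic identity, where the leading exponential terms cancel and one must control the subleading contributions governed by $\int q_j$ over the competing subintervals; the hypothesis $x_0<1-2d$ is exactly what prevents the unknown region from being shadowed.
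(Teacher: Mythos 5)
Your reduction to a meromorphic-in-$\lambda$ identity is, modulo packaging, the paper's own first step: the Titchmarsh-plus-time-analyticity argument you invoke is exactly how the paper (via Lemma \ref{lem-K}) passes from data on $(0,T)$ to the identity \eqref{obser-lap}, and your Laplace-transformed relation is the closed Green's-function form of that eigenfunction expansion. (One caveat: state the identity for the kernel $K$ of Lemma \ref{lem-K}, not for $u$ itself, since $\eta$, hence $u$, is only given on $(0,T)$; dividing by $\widehat\eta$ is then unnecessary.) Where you genuinely depart from the paper is Case 1 ($d\le x_0\le 1$), and there your argument is correct and appreciably more direct. Since $q_1=q_2$ on $[d,1]$ and $H$ is shared, the right-hand solution $\Phi$ is common to $j=1,2$; writing $N_j(\lambda)$ as the Wronskian of $\varphi_j$ and $\Phi$ at $x=x_0$ and clearing denominators, the $\Phi'$-terms cancel identically, leaving $\Phi(x_0;\lambda)\bigl(\varphi_1\varphi_2'-\varphi_2\varphi_1'\bigr)(x_0;\lambda)\equiv 0$; as $\Phi(x_0;\cdot)$ is a nontrivial entire function, the left Weyl $m$-functions at $x_0$ coincide, and Marchenko's uniqueness theorem (Theorem 2.1 in \cite{GW2015}) finishes. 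The paper reaches the same conclusion only after extracting the discrete data \eqref{eq-eigen}, \eqref{eq-phi12}, proving the double-zero Lemma \ref{lem-U-multi2}, and running the entire-function estimates of Lemma \ref{lem-F} together with Proposition B.6 of \cite{F2000inverse}; your cancellation bypasses all of that machinery for this range of $(d,x_0)$.

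Case 2, however, contains a genuine gap: what you offer is a plan, and the proposed mechanism cannot work as described. Expanding your single scalar relation for large $\lambda$ after the leading exponentials cancel yields only integral-type constraints on $q_1-q_2$ (the subleading terms are governed by quantities such as $\int(q_1-q_2)$ over subintervals); no asymptotic expansion of one identity can force the pointwise equality $m_1(d,\cdot)=m_2(d,\cdot)$, and nothing in your outline uses the eigenvalue \emph{density}, which is the actual content of the threshold $x_0<1-2d$. The paper's mechanism is different: a residue (contour-integration) analysis of \eqref{obser-lap} around each pole $-\lambda_{1,n}$ with $\varphi_{1,n}(x_0)\neq 0$ shows that every such $\lambda_{1,n}$ is a common eigenvalue of $L(q_1)$ and $L(q_2)$, with matched products \eqref{eq-phi12}. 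The set $\Lambda$ of these eigenvalues obeys the counting bound \eqref{esti-N}, namely $N_\Lambda(s)\ge (1-x_0)\sqrt{s}/\pi$ for $x_0<1/2$, because an eigenvalue missing from $\Lambda$ must be a Dirichlet eigenvalue of \emph{both} half-interval problems \eqref{def-mu-} and \eqref{def-mu+}, and these are sparse by Lemma \ref{lem-mu}. Since $x_0<1-2d$ gives $1-x_0>2d$, the partial-information uniqueness theorem of Gesztesy--Simon (Theorem 1.3 of \cite{F2000inverse}: potential known on $[d,1]$ plus a set of common eigenvalues of relative density at least $2d$) applies and yields $q_1=q_2$ on $[0,1]$ and $h_1=h_2$. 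Some step of exactly this type --- extraction of common eigenvalues from the pole structure, a counting estimate, and a density/canonical-product argument --- is indispensable, and it is what your Case 2 is missing.
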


\begin{rmk}
Since $(d,x_0) = (1,1)$ is covered by the range in Theorem 1, we
have the uniqueness.
On the other hand, with $x_0=1$,
the problem is reduced to the inverse problems
on the determination of $h$ and $q(x)$, $0<x<1$ by boundary data, and
the uniqueness can be obtained similarly to \cite{Rundell2021uniqueness}.
In other words, Theorem 1 generalizes the existing result by using a general
choice of observation point $x_0$ for the fractional diffusion equations.
\end{rmk}

Next we consider the case which is not included in Theorem 1.
For the statement, we introduce a operator and notations.
For $q_1 \in C[0,1]$, we define an operator $L(q_1)$ in $L^2(0, 1)$ by
\begin{equation}\label{eq-sdop}
\left\{
\begin{aligned}
&L(q_1) u(x) = - u''(x) - q_1(x)u(x), \quad 0<x<1,\\
& \mathcal{D} (L(q_1)) = \left\{
u \in H^2(0,1);\,  \frac{du}{dx}(0) - hu(0) = \frac{du}{dx}(1)
+ Hu(1) = 0 \right\},
\end{aligned}\right.
\end{equation}
where $(q_1, h, H)$ belongs to the admissible set $\mathcal A$
defined by \eqref{def-Ad}. It is well known that $L(q_1)$ is
self-adjoint in $L^2(0,1)$ and possesses a discrete spectrum, denoted
by $\sigma(L(q_1)):=\{\lambda_{1,n} \}_{n\in \mathbb{N} \cup \{0\}}$
which consists of simple real eigenvalues
and admits the asymptotic behavior $\lambda_{1,n} =O(n^2\pi^2)$
as $n\to\infty$, and
see e.g., \cite{Levitan1991sturm}.
Moreover by $\varphi_{1,n}$, we denote a normalized eigenfunction for
$\lambda_{1,n}$: $L(q_1)\va_{1,n} = \lambda_{1,n}\va_{1,n}$ in $(0,1)$ and
$\Vert \va_{1,n}\Vert:= \int^1_0 \vert \va_{1,n}(x)\vert^2 dx = 1$.

Given a sequence of positive real numbers $I:=\{a_n\}_{n=0}^{\infty}$, let
\begin{equation}\label{def-N}
N_{I}(s) :=  \# \{ n\in \mathbb{N} \cup \{0\}: a_{ n} \leq s \},
\end{equation}
where $\# S$ means the number of
the elements of a set $S$. Furthermore, we define the set
\begin{equation}\label{def-Lambda}
\Lambda:=\{\lambda_{1,n};\varphi_{1,n}(x_0)\neq0\}.
\end{equation}

Now we are ready to state the second main result.
\begin{thm}\label{thm-unique'}
We assume that there exist constants $A \ge 2d$ and $B \geq -\frac{1}{4}-d$
such that
\begin{equation}\label{ineq-Lambda}
N_{\Lambda}(s) \geq A N_{\sigma(L(q_{1}))}(s) + B
\end{equation}
for sufficiently large $s>0$, where
the set $\Lambda$ is defined as \eqref{def-Lambda}
For such $d\in (0,1)$, let
$$
(d,x_0) \in \left\{ (d,x_0)\in (0,1)^2;\,
-2d + 1 < x_0 < d, \,\, \frac{1}{3} < d < \frac{1}{2}\right\}.
$$
Let $u_j(x, t)$ be the solutions of \eqref{eq-gov'} with
$q_j,h_j\in\mathcal A$, $j=1, 2$.
We assume that $q_{1}(\cdot)=q_{2}(\cdot)$ in $(d,1)$,
and $\eta \in H_{\alpha}(0,T)$ does not vanish identically in $(0,T)$.
Then $u_{1}(x_{0},\cdot) = u_{2}(x_{0},\cdot)$ in $(0,T)$ implies
$$
q_{1}(\cdot) = q_{2}(\cdot) \mbox{ in }[0,1] \mbox{ and }h_1 = h_2.
$$
\end{thm}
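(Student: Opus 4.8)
The plan is to reduce the interior one-point observation to partial spectral data for the Sturm--Liouville operators $L(q_j)$ introduced in \eqref{eq-sdop}, and then to apply an inverse-spectral uniqueness theorem, phrased through the Weyl $m$-function, for a potential prescribed on the subinterval $(d,1)$. First I would take the Laplace transform in $t$. Writing $\widehat{w}(p)=\int_0^\infty e^{-pt}w(t)\,dt$ and using the eigenfunction (Mittag--Leffler) representation of the solution of \eqref{eq-gov'}, a Green function computation for the resolvent $(L(q_j)+p^{\alpha})^{-1}$ with the inhomogeneous Robin datum $\eta$ at $x=1$ yields
$$
\widehat{u_j}(x_0,p)=\widehat{\eta}(p)\sum_{n\in\NZ}
\frac{\va_{j,n}(x_0)\,\va_{j,n}(1)}{p^{\alpha}+\lambda_{j,n}}.
$$
Note that $\va_{j,n}(1)\neq0$ for every $n$: otherwise the boundary condition $\va_{j,n}'(1)+H\va_{j,n}(1)=0$ would force $\va_{j,n}(1)=\va_{j,n}'(1)=0$ and hence $\va_{j,n}\equiv0$. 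Since $\eta\not\equiv0$ gives $\widehat{\eta}\not\equiv0$, the hypothesis $u_1(x_0,\cdot)=u_2(x_0,\cdot)$ allows me to cancel $\widehat{\eta}(p)$ and, after analytic continuation in the variable $z:=p^{\alpha}\in\mathbb{C}$, to obtain the identity of meromorphic functions
$$
\sum_{n}\frac{\va_{1,n}(x_0)\,\va_{1,n}(1)}{z+\lambda_{1,n}}
=\sum_{n}\frac{\va_{2,n}(x_0)\,\va_{2,n}(1)}{z+\lambda_{2,n}}.
$$

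Next I would match poles and residues. Because $\va_{j,n}(1)\neq0$, the poles of each side lie exactly at $z=-\lambda_{j,n}$ with $\va_{j,n}(x_0)\neq0$, that is, at $-\Lambda_j$ where $\Lambda_j:=\{\lambda_{j,n};\ \va_{j,n}(x_0)\neq0\}$ as in \eqref{def-Lambda}. Comparing the two sides then forces $\Lambda_1=\Lambda_2=:\Lambda$ together with the residue identities $\va_{1,n}(x_0)\va_{1,n}(1)=\va_{2,n}(x_0)\va_{2,n}(1)$ along the common eigenvalues. This is precisely a set of partial spectral data: the eigenvalues in $\Lambda$ and the interior-point products $\va_{j,n}(x_0)\va_{j,n}(1)$. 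Here the hypothesis $q_1=q_2$ on $(d,1)$ enters: for a shared eigenvalue $\lambda$, the eigenfunction on $(d,1)$ solves the same known equation with the same boundary condition at $x=1$, so it is determined up to a multiplicative constant; this lets me relate the factors $\va_{j,n}(1)$ and the Cauchy data at $x=d$, and thereby convert the interior residues into the norming-constant/$m$-function data needed for the inverse problem on the unknown interval $(0,d)$.

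Finally I would run the $m$-function uniqueness argument. With $q$ already known on $(d,1)$, the task reduces to determining $q$ on $(0,d)$ together with the single Robin parameter $h$ from the spectral subset indexed by $\Lambda$. The counting hypothesis \eqref{ineq-Lambda}, namely $N_{\Lambda}(s)\ge A\,N_{\sigma(L(q_1))}(s)+B$ with $A\ge 2d$ and $B\ge-\tfrac14-d$, is exactly the density threshold of Horv\'ath--Gesztesy--Simon type: since $N_{\sigma(L(q_1))}(s)\sim\tfrac{\sqrt{s}}{\pi}$, it guarantees that the retained eigenvalues carry a proportion at least $2d$ of the full spectral density, which is the amount required to fix a potential on an interval of length $d$, the constant $-\tfrac14-d$ absorbing the extra unknown $h$. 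The geometric restriction $-2d+1<x_0<d$, $\tfrac13<d<\tfrac12$, ensures that the data transported from $(d,1)$ and reflected about $x_0$ cover all of $(0,d)$, so that no subinterval remains undetermined. Invoking the Weyl-function uniqueness theorem then yields $q_1=q_2$ on $[0,1]$ and $h_1=h_2$.

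I expect the decisive difficulty to lie in the third step. Unlike the range $d\le x_0$ of Theorem~\ref{thm-unique}, here the observation point $x_0<d$ sits inside the interval to be reconstructed, so the known potential on $(d,1)$ determines the residues $\va_{j,n}(x_0)\va_{j,n}(1)$ only partially; the interior-point data must be transformed into admissible spectral data for the $m$-function theorem, and the counting condition \eqref{ineq-Lambda} must be verified to meet the threshold with the sharp constants $A=2d$ and $B=-\tfrac14-d$. Controlling the eigenfunctions that vanish at $x_0$ (hence are excluded from $\Lambda$) is what makes this step delicate and is the reason the extra density hypothesis is imposed.
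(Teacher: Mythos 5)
Your overall route is essentially the paper's: reduce the one-point measurement to the statement that every eigenvalue in $\Lambda$ is a common eigenvalue of $L(q_1)$ and $L(q_2)$ (the paper's \eqref{eq-eigen}, with matching products as in \eqref{eq-phi12}), and then, since $q_1=q_2$ on $(d,1)$ and $H$ is shared, invoke the Gesztesy--Simon partial-information uniqueness theorem (Theorem 1.3 in \cite{F2000inverse}), whose density hypothesis is exactly what \eqref{ineq-Lambda} supplies. Two side remarks: for this range ($x_0<d$) the residue identities and your proposed conversion of them into norming-constant/$m$-function data are not actually needed --- the common-eigenvalue set plus the counting bound already feed Theorem 1.3 of \cite{F2000inverse} directly; and the restriction $-2d+1<x_0<d$, $\frac13<d<\frac12$ is not used ``geometrically'' to make reflected data cover $(0,d)$ --- it merely delimits the $(d,x_0)$ region not already handled by Theorem \ref{thm-unique}, while $x_0$ enters the proof only through the definition \eqref{def-Lambda} of $\Lambda$.

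There is, however, a genuine gap in your first step. You form $\widehat{u_j}(x_0,p)=\int_0^\infty e^{-pt}u_j(x_0,t)\,dt$ and cancel $\widehat{\eta}(p)$, but the observation $u_1(x_0,\cdot)=u_2(x_0,\cdot)$ and the boundary input $\eta$ are given only on the finite interval $(0,T)$: neither $\widehat{u_j}$ nor $\widehat{\eta}$ is defined, and even with some extension of $\eta$, equality of $u_1(x_0,t)$ and $u_2(x_0,t)$ for $t>T$ is precisely what has to be established --- it cannot be obtained by analytic continuation of the solutions, since $u_j(x_0,\cdot)$ is a convolution with an arbitrary $H_{\alpha}(0,T)$ input and is not analytic in $t$. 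The paper bridges exactly this point: it writes $\int_0^t u_j(x_0,s)\,ds=\left(K_j(x_0,\cdot)*\eta\right)(t)$ with the kernel $K_j$ of Lemma \ref{lem-K}, applies the Titchmarsh convolution theorem \cite{Titchmarsh1926zeros} to $(K_1-K_2)(x_0,\cdot)*\eta=0$ on $(0,T)$ to get $K_1(x_0,\cdot)=K_2(x_0,\cdot)$ on some $(0,t_0)$, and then uses that $K_j(x_0,\cdot)$ --- unlike $u_j$ --- is analytic in $t>0$, so the identity extends to all $t>0$; only after this extension are the large-time Mittag--Leffler asymptotics and the Laplace transform applied, yielding \eqref{obser-lap}. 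Without this Titchmarsh-plus-kernel-analyticity step, your argument does not get off the ground for finite $T$; once it is inserted in place of the ``cancel $\widehat{\eta}$'' shortcut, the remainder of your plan coincides with the paper's proof.
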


\begin{rmk}
Theorem 1 asserts the uniqueness for $(d,x_0) \in
\{ (d,x_0) \in (0,1)^2; \, 0< d \le x_0 \le 1\} \cup
\left\{ (d,x_0) \in (0,1)^2; \, 0\le x_0 < \min\{ d, -2d+1\}, \,\,
0<d<\frac{1}{2} \right\}$.
On the other hand, if we can choose $A > \frac{2}{3}$, then we can
directly verify that the uniqueness range of $(d,x_0)$ in Theorem 2 is
disjoint with the uniqueness range in Theorem 1,
and so we can extend the range of $(d,x_0)$ for the uniqueness.
For $(d, x_0)$ which Theorems 1 and 2 do not cover,
we do not know the uniqueness.
\end{rmk}

We provide a sufficient condition for \eqref{ineq-Lambda}.
\\
{\bf Lemma 1.}
{\it
We assume that there exists a constant $A > 0$ such that
\begin{equation}\label{2.7}
\lim\inf_{s\to\infty} N_{\Lambda}(s)s^{-\frac{1}{2}} > \frac{A}{\pi}.
\end{equation}
Then \eqref{ineq-Lambda} follows with
$d \le \frac{A}{2}$, so that the uniqueness for the inverse problem
holds.
}
\\
{\bf Proof.}
We have $\sqrt{\lambda_{1,n}} = n\pi + O(\frac{1}{n})$ as
$n \to \infty$ (e.g., p.8 in \cite{Levitan1991sturm}).
We remark that in \cite{Levitan1991sturm}, as the $x$-interval,
$(0,\pi)$ is considered in place of
$(0,1)$, but the change in $x$ can yield the asymptotics for the
$x$-interval $(0,1)$.
Then $\lambda_{1,n} \le s$ is equivalent to
$n + O(\frac{1}{n}) \le \frac{1}{\pi}s^{\frac{1}{2}}$ as $n\to \infty$.
Hence
$$
\frac{1}{\pi}s^{\frac{1}{2}} + O(\frac{1}{n}) - 1 \le n
\le \frac{1}{\pi}s^{\frac{1}{2}} + O(\frac{1}{n}),
$$
which implies
$$
\frac{1}{\pi}\frac{s^{\frac{1}{2}}}{n} + O(\frac{1}{n^2}) - \frac{1}{n}
\le 1 \le \frac{1}{\pi}\frac{s^{\frac{1}{2}}}{n} + O(\frac{1}{n^2}).
$$
Therefore, $\lim_{n\to \infty} \frac{1}{\pi}\frac{s^{\frac{1}{2}}}{n}
= 1$, and so $O(\frac{1}{n}) = O(\frac{1}{s^{\frac{1}{2}}})$ as
$n\to \infty$, that is, $s \to \infty$.
Hence,
$$
\frac{1}{\pi}s^{\frac{1}{2}} + O(\frac{1}{s^{\frac{1}{2}}}) - 1
\le N_{\sigma(L(q_1))}(s)
\le \frac{1}{\pi}s^{\frac{1}{2}} + O(\frac{1}{s^{\frac{1}{2}}})
$$
as $s \to \infty$.
Consequently,
\begin{equation}\label{2.8}
N_{\sigma(L(q_1))}(s) = \frac{1}{\pi}s^{\frac{1}{2}}
+ o(s^{\frac{1}{2}}) \quad \mbox{as $s \to \infty$.}
\end{equation}
The assumption yields
$$
\lim\inf_{s\to\infty} N_{\Lambda}(s)s^{-\frac{1}{2}}
> \frac{A}{\pi} + 2\ep \quad \mbox{for sufficiently small
$\ep > 0$}.
$$
Then we can find large $s_1>0$ such that
$$
N_{\Lambda}(s)s^{-\frac{1}{2}} > \frac{A}{\pi} + 2\ep \quad
\mbox{for all $s \ge s_1$}.
$$
Let $B \in \mathbb{R}$ be given arbitrarily.
For these constants $\ep>0$ and $B$, we can choose large $s_2>0$ such that $s_2 \ge s_1$ and
$$
A\frac{o(s^{\frac{1}{2}})}{s^{\frac{1}{2}}} < \ep, \quad
Bs^{-\frac{1}{2}} < \ep
$$
for all $s \ge s_2$.
Hence,
$$
N_{\Lambda}(s)s^{-\frac{1}{2}} > \frac{A}{\pi}
+ A\frac{o(s^{\frac{1}{2}})}{s^{\frac{1}{2}}} + Bs^{-\frac{1}{2}}
= A\left( \frac{1}{\pi} + \frac{o(s^{\frac{1}{2}})}{s^{\frac{1}{2}}}\right)
+ Bs^{-\frac{1}{2}} \quad \mbox{for $s \ge s_2$},
$$
that is,
$$
N_{\Lambda}(s) \ge A\left( \frac{1}{\pi}s^{\frac{1}{2}}
+ o(s^{\frac{1}{2}})\right) + B
= AN_{\sigma(L(q_1))} + B \quad \mbox{for $s \ge s_2$},
$$
which is \eqref{ineq-Lambda}.
$\blacksquare$
\\

Now we discuss two example in terms of Lemma 1.
\\
{\bf Example.}
For simplicity, we consider $q_1 \equiv 0$ and $h=H=0$.  Then
$\sigma(L(q_1)) = \{ n^2\pi^2\}_{n\in \NZ}$ and
$\va_{1,n}(x) =
\left\{ \begin{array}{rl}
& 1, \quad n=0, \\
& \sqrt{2}\cos n\pi x, \quad n\in \N.
\end{array}\right.$
\\
{\bf Case (i).}
Let $x_0 \not\in \mathbb{Q}$.  Then $\va_{1,n}(x_0) \ne 0$ for all
$n \in \NZ$, and so $\Lambda = \{\lambda_{1,n}\}_{n\in \NZ}$.
Hence, similarly to \eqref{2.8}, we see that
$$
N_{\Lambda}(s) \ge \frac{s^{\frac{1}{2}}}{\pi} + o(s^{\frac{1}{2}}).
$$
Therefore, $\liminf_{s\to\infty} N_{\Lambda}(s)s^{-\frac{1}{2}}
\ge \frac{1}{\pi}$ and \eqref{2.7} holds for any $A < 1$.
Consequently, in Theorem 2, we can choose $d < \frac{1}{2}$.
Thus, setting $d = \frac{1}{2} - \ep$ with sufficiently small
$\ep > 0$, if $q_2(x) = q_1(x) = 0$ for $\frac{1}{2} - \ep
< x < 1$, then
$$
u_1(x_0,t) = u_2(x_0,t) \quad \mbox{for $0<t<T$ with
some $x_0 \in \left(2\ep,\, \frac{1}{2} - \ep\right)$}
$$
implies $q_2(x) = 0$ for $0<x < 1$.
\\
{\bf Case (ii).}
Let $x_0 = \frac{1}{2}$.  Then
$\Lambda = \{ 4n^2\pi^2;\, n \in \NZ\}$.
Similarly to \eqref{2.8}, we can obtain
$$
N_{\Lambda}(s) \ge \frac{s^{\frac{1}{2}}}{2\pi} + o(1).
$$
Therefore, \eqref{2.7} is satisfied if $A < \frac{1}{2}$.
Hence, in order that $A \ge 2d$ is satisfied, we have to
assume that $d < \frac{1}{4}$ and so for the uniqueness for the
inverse problem, a stronger condition than Case (i) is necessary, that is,
we must assume that $q_2(x) = 0$ for $\frac{1}{4}-\ep < x < 1$
with some constant $\ep > 0$.
$\blacksquare$

\begin{rmk}
The corresponding uniqueness results in Theorems \ref{thm-unique} and
\ref{thm-unique'} remains valid if one take other boundary conditions e.g.,
Dirichlet boundary condition or Neumann boundary condition in \eqref{eq-gov'}.
The proof is similar, and so we omit it.
\end{rmk}

The remainder of the paper is organized into three sections. Section 3 compiles
definitions and preliminary outcomes on the spatial differential operator,
along with several lemmas essential for addressing the forward problem.
Section 4 is dedicated to the proofs of Theorems \ref{thm-unique} and
\ref{thm-unique'}.  Finally, Section 5 provides concluding remarks.
Throughout this paper, we use the symbol $C$ to represent a generic constant,
which may vary line by line.

%%------------------------------------------------------------------------------------------------------------------------------
\section{Preliminaries}

We recall that the operator $L(q)$ in $L^2(0, 1)$ is defined by
\eqref{eq-sdop}.
We consider the initial-value problems
\begin{equation}\label{eigen-h}
\left \{\begin{aligned}
& L(q)\varphi_q(x;\lambda) = \lambda(q) \varphi_q(x;\lambda) \quad
\mbox{in }(0,1), \\
& \varphi_q(0;\lambda)=1, \ \ \ \varphi'_q(0;\lambda) = h, \\
\end{aligned}\right.
\end{equation}
and
\begin{equation}\label{eigen-H}
\left \{\begin{aligned}
& L(q)\psi_q(x;\lambda) = \lambda(q) \psi_q(x;\lambda)\quad \mbox{in }(0,1),\\
& \psi_q(1;\lambda) = 1, \ \ \ \psi'_q(1;\lambda) = -H.\\
\end{aligned}\right.
\end{equation}
We sometimes omit $q$ in the above notations, and
for examples we write $\lambda=\lambda(q)$ and $\varphi(x;\lambda)
=\varphi_q(x;\lambda)$ for short. Moreover, in the case of
$\lambda = \lambda_n = \lambda_n(q)$, we rewrite $\varphi_n(x)
= \varphi_q(x;\lambda_n)$ and $\psi_n(x) = \psi_q(x;\lambda_n)$
if no conflicts may occur.

It is well known that $\varphi(x;\lambda)$ and
$\psi(x;\lambda)$ are entire functions of $\lambda$ for each $x\in [0,1]$,
whereas $\varphi_n(x)$ and $\psi_n(x)$ are the eigenfunctions of the operator
$L(q)$ corresponding to the eigenvalue $\lambda_n$. Moreover, $\psi_n$ and
$\varphi_n$ are linearly dependent, that is, the relation
\begin{equation}\label{linear_dependent}
\psi_n(x) = k_n \varphi_{n}(x),\quad \forall \ x\in (0,1)
\end{equation}
holds true. Here $k_n = \frac{1}{\varphi_n(1)}$ is neither zero nor $\infty$,
which is called the norming constant corresponding to $\lambda_{n}$.
In addition, $\varphi(x;\lambda)$, $\varphi'(x;\lambda)$ satisfy the following
asymptotic behaviour as $|\lambda|\rightarrow \infty$ uniformly
with respect to $x\in[0,1]$:
\begin{equation}\label{asymp-phi}
\begin{aligned}
\varphi(x;\lambda) &= \cos(\sqrt{\lambda}x)
+ O \left(\frac{e^{|\mathrm{Im} \sqrt{\lambda}|x}}{|\sqrt{\lambda}|}\right),
                                                \\
\varphi'(x;\lambda) &= -\sqrt{\lambda}\sin(\sqrt{\lambda}x)
+ O (e^{|\mathrm{Im} \sqrt{\lambda}| x}),
\end{aligned}
\end{equation}
The above assertions can be found in e.g., \cite{Levitan1991sturm}.

Next, we divide the original problem \eqref{eq-sdop} into two problems
with respect to the intervals $(0,x_0)$ and  $(x_0,1)$:
\begin{equation}\label{def-mu-}
L(q_{1,-}) y :=
\left \{\begin{aligned}
& -y^{''} + q_{1}(x)y = \mu_{n}^- y,   \ \  x\in (x_{0}, 1), \\
& y'(0) - hy(0) = 0,  \  y(x_0)=0 \\
\end{aligned}\right.
\end{equation}
and
\begin{equation}\label{def-mu+}
L(q_{1,+}) y := \left \{\begin{aligned}
& -y^{''} + q_{1}(x)y = \mu_{n}^{+} y,   \ \  x\in (x_{0}, 1), \\
& y(x_{0})=0,  \  y^{'}(1) + H y(1)=0. \\
\end{aligned}\right.
\end{equation}
We also know from \cite{freiling2001inverse} that $\mu_{\pm}$ admits
the following asymptotic behavior:
\begin{lem}\label{lem-mu}
Let $\mu_\pm$ be defined as in \eqref{def-mu-} and \eqref{def-mu+}.
Then the asymptotic expansions hold true:
$$
\begin{aligned}
\sqrt{\mu_{n}^{-}} &= \frac{(n+\frac{1}{2})\pi}{x_{0}} + O\left(\frac{1}{n}
\right),\\
\sqrt{\mu_{n}^{+}} &= \frac{(n+\frac{1}{2})\pi}{1-x_{0}} + O\left(\frac{1}{n}
\right),
\end{aligned}
\quad n\to\infty.
$$
\end{lem}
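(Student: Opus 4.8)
The plan is to characterize $\mu_n^{\pm}$ as the zeros of entire functions built from the fundamental solutions $\varphi_q$ and $\psi_q$ of \eqref{eigen-h}--\eqref{eigen-H}, and then to locate those zeros by inserting the large-$\lambda$ asymptotics \eqref{asymp-phi}. First I would observe that $\varphi_q(\cdot;\lambda)$ automatically satisfies the left boundary condition of \eqref{def-mu-}, since $\varphi'_q(0;\lambda) - h\varphi_q(0;\lambda) = h - h = 0$. Hence $\lambda$ is an eigenvalue $\mu_n^-$ of the problem on $(0,x_0)$ if and only if the remaining Dirichlet condition $\varphi_q(x_0;\lambda) = 0$ holds, so that $\{\mu_n^-\}$ is exactly the zero set of the entire function $\lambda \mapsto \varphi_q(x_0;\lambda)$. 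Symmetrically, since $\psi'_q(1;\lambda) + H\psi_q(1;\lambda) = -H + H = 0$, the solution $\psi_q(\cdot;\lambda)$ satisfies the right boundary condition of \eqref{def-mu+}, and $\{\mu_n^+\}$ is the zero set of $\lambda \mapsto \psi_q(x_0;\lambda)$.

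Next I would feed in the asymptotic behaviour. Both problems are regular self-adjoint Sturm--Liouville problems, so their eigenvalues are real and tend to $+\infty$; thus for large $n$ we have $\mu_n^{\pm} > 0$ and $\sqrt{\mu_n^{\pm}} \in \R$, whence the factor $e^{|\mathrm{Im}\sqrt{\lambda}|\,x_0}$ in \eqref{asymp-phi} equals $1$. Evaluating \eqref{asymp-phi} at $x = x_0$ turns the characteristic equation $\varphi_q(x_0;\lambda) = 0$ into
$$
\cos(\sqrt{\lambda}\,x_0) = O\Bigl(\tfrac{1}{\sqrt{\lambda}}\Bigr),
\quad \lambda \to +\infty.
$$
For $\psi_q$ I would first record the mirror asymptotic: the reflection $\tilde{x} = 1 - x$ carries $\psi_q(\cdot;\lambda)$ into the $\varphi$-type solution for the reflected potential $q(1-\cdot)$ with $h$ replaced by $H$, so \eqref{asymp-phi} yields $\psi_q(x;\lambda) = \cos(\sqrt{\lambda}(1-x)) + O(e^{|\mathrm{Im}\sqrt{\lambda}|(1-x)}/|\sqrt{\lambda}|)$. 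Hence $\psi_q(x_0;\lambda) = 0$ becomes $\cos(\sqrt{\lambda}(1-x_0)) = O(1/\sqrt{\lambda})$.

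It then remains to locate the roots of $\cos(\sqrt{\lambda}\,\ell) = O(1/\sqrt{\lambda})$ for $\ell = x_0$ and $\ell = 1-x_0$ respectively. The roots of the unperturbed equation $\cos(\sqrt{\lambda}\,\ell) = 0$ are $\sqrt{\lambda} = (n+\tfrac12)\pi/\ell$. Writing $\sqrt{\mu_n^{\pm}} = (n+\tfrac12)\pi/\ell + \delta_n$ and substituting, the exact identity $\cos((n+\tfrac12)\pi + \ell\delta_n) = (-1)^{n+1}\sin(\ell\delta_n)$ reduces the equation to $|\sin(\ell\delta_n)| = O(1/n)$, whence $\ell\delta_n = O(1/n)$ and therefore $\delta_n = O(1/n)$; a Rouch\'e argument on vertical strips of width $\pi/\ell$ makes the indexing and the root count rigorous. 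This gives $\sqrt{\mu_n^-} = (n+\tfrac12)\pi/x_0 + O(1/n)$ and $\sqrt{\mu_n^+} = (n+\tfrac12)\pi/(1-x_0) + O(1/n)$, as asserted.

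The main obstacle I anticipate is upgrading the error from the qualitative $o(1)$ spacing, which is immediate from the leading $\cos$ term, to the sharp $O(1/n)$ remainder; this requires that the remainder in \eqref{asymp-phi} be controlled uniformly in $x$ as $\lambda \to \infty$ along the real axis, and that the perturbation argument (or Rouch\'e's theorem) be carried out quantitatively so that each root stays within distance $O(1/n)$ of the corresponding zero of $\cos(\sqrt{\lambda}\,\ell)$. By contrast, establishing the mirror asymptotic for $\psi_q$ is routine via the reflection above, and the real, discrete, $+\infty$-accumulating spectrum is standard for these regular problems.
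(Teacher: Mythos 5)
Your proof is correct. The paper in fact gives no proof of this lemma at all --- it simply invokes \cite{freiling2001inverse} --- and your argument (characterizing $\{\mu_n^-\}$ as the zeros of $\lambda\mapsto\varphi_{q_1}(x_0;\lambda)$ and, via reflection, $\{\mu_n^+\}$ as the zeros of $\psi_{q_1}(x_0;\lambda)$, inserting the uniform asymptotics \eqref{asymp-phi}, and then upgrading the localization to $O(1/n)$ through the identity $\cos((n+\tfrac12)\pi+\ell\delta_n)=(-1)^{n+1}\sin(\ell\delta_n)$) is precisely the standard proof found in that reference, so you have supplied the details the paper omits. Two minor points: Rouch\'e's theorem should be applied on closed contours in the $\rho=\sqrt{\lambda}$ plane (e.g.\ circles $|\rho|=(n+1)\pi/\ell$), not on unbounded ``vertical strips,'' and the crude localization $\delta_n=o(1)$ obtained from that count must precede the sine identity (otherwise $\sin(\ell\delta_n)=O(1/n)$ pins $\ell\delta_n$ down only modulo $\pi$) --- your ordering respects this; note also that \eqref{def-mu-} as printed contains typos (the interval should read $(0,x_0)$, and the sign of $q_1$ is inconsistent with \eqref{eq-sdop}), which your reading implicitly and correctly fixes, the potential's sign being irrelevant to the asymptotics.
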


On the basis of the above lemma, recalling the definitions of the sets
$N_\Lambda(s)$ and $\Lambda$, it follows that
\begin{equation}\label{def-Lambda'}
\Lambda^c :=  \sigma(L(q_{1})) \setminus \Lambda = \{\lambda_{1,n}:
\phi_{1,n}(x_{0}) = 0 \}.
\end{equation}
Moreover, it is not difficult to see that
$$
\Lambda^{c} \subset \{ \mu_{n}^{-}\}_{n=1}^{\infty}
\cap \{ \mu_{n}^{+}\}_{n=1}^{\infty},
$$
where $\mu_{n}^{-}$, $\mu_{n}^{+}$ are defined by \eqref{def-mu-} and
\eqref{def-mu+} respectively. We have
\begin{lem}
Let $\Lambda$ and $N_\Lambda(s)$ be defined in \eqref{def-Lambda}
and \eqref{def-N} respectively. Then $N_\Lambda(s) \ge \max\{1-x_0,x_0\}
\frac{\sqrt{s}}{\pi}$ is valid for sufficiently large $s>0$.
\end{lem}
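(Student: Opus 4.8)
The plan is to exploit the exact partition $\sigma(L(q_1)) = \Lambda \cup \Lambda^c$ recorded in \eqref{def-Lambda'}. Since $\Lambda$ and $\Lambda^c$ are disjoint and their union is all of $\sigma(L(q_1))$, the counting functions satisfy the identity
$$
N_{\Lambda}(s) = N_{\sigma(L(q_1))}(s) - N_{\Lambda^c}(s)
$$
for every $s>0$. Hence it suffices to bound $N_{\sigma(L(q_1))}(s)$ from below and $N_{\Lambda^c}(s)$ from above, and both leading-order asymptotics are already at hand. For the first factor I would simply quote \eqref{2.8}, namely $N_{\sigma(L(q_1))}(s) = \frac{1}{\pi}s^{\frac12} + o(s^{\frac12})$, keeping in mind that the index $n$ runs from $0$ so that the lower bound carries a favorable $+1$ offset from the zeroth eigenvalue.

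For the upper bound on $N_{\Lambda^c}(s)$ I would use the inclusion $\Lambda^c \subset \{\mu_n^-\}_{n=1}^\infty \cap \{\mu_n^+\}_{n=1}^\infty$ stated just above the lemma (which itself follows from the fact that if $\varphi_{1,n}(x_0)=0$ then the restrictions of $\varphi_{1,n}$ to $(0,x_0)$ and to $(x_0,1)$ solve \eqref{def-mu-} and \eqref{def-mu+} respectively). This inclusion yields $N_{\Lambda^c}(s) \le \min\{N_{\{\mu_n^-\}}(s),\, N_{\{\mu_n^+\}}(s)\}$. By Lemma \ref{lem-mu} we have $\sqrt{\mu_n^-} = (n+\tfrac12)\pi/x_0 + O(1/n)$, so $\mu_n^- \le s$ forces $n \le \frac{x_0}{\pi}s^{\frac12} - \frac12 + o(1)$, whence $N_{\{\mu_n^-\}}(s) \le \frac{x_0}{\pi}s^{\frac12} - \frac12 + o(1)$, and symmetrically $N_{\{\mu_n^+\}}(s) \le \frac{1-x_0}{\pi}s^{\frac12} - \frac12 + o(1)$.

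Combining the two estimates through the identity above gives
$$
N_{\Lambda}(s) \ge N_{\sigma(L(q_1))}(s) - \min\{N_{\{\mu_n^-\}}(s),\, N_{\{\mu_n^+\}}(s)\}
\ge \frac{1 - \min\{x_0,\,1-x_0\}}{\pi}s^{\frac12} + \tfrac12 + o(s^{\frac12}),
$$
and since $\min\{x_0,1-x_0\} + \max\{x_0,1-x_0\} = 1$ the leading coefficient is exactly $\max\{1-x_0,x_0\}/\pi$, which is the claim. The main point to watch is that the desired conclusion is a clean inequality with the exact leading coefficient, not merely an asymptotic equivalence: the $o(s^{\frac12})$ and the integer rounding could in principle erode the bound. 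The remedy is precisely the half-integer shift $n+\tfrac12$ in the $\mu_n^{\pm}$ asymptotics of Lemma \ref{lem-mu}, which supplies a strictly positive cushion ($+\tfrac12$ above) that, together with the $n=0$ offset in $N_{\sigma(L(q_1))}$, dominates the rounding error and makes $N_{\Lambda}(s) \ge \max\{1-x_0,x_0\}\frac{\sqrt{s}}{\pi}$ hold for all sufficiently large $s$.
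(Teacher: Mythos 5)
Your proof is correct and takes essentially the same route as the paper's: both rest on the complement identity $N_\Lambda(s)=N_{\sigma(L(q_1))}(s)-N_{\Lambda^c}(s)$, the inclusion of $\Lambda^c$ in $\{\mu_n^-\}_{n}\cap\{\mu_n^+\}_{n}$, the $\mu_n^{\pm}$ asymptotics of Lemma \ref{lem-mu}, and the Weyl-type asymptotics \eqref{2.8} for $N_{\sigma(L(q_1))}$. If anything, your bookkeeping of the half-integer shifts and indexing offsets is more explicit than the paper's own proof, which simply asserts $N_{\Lambda^c}(s)\le\min\{x_0,1-x_0\}\frac{\sqrt{s}}{\pi}$ and concludes $N_\Lambda(s)\ge\left(1-\min\{x_0,1-x_0\}\right)\frac{\sqrt{s}}{\pi}$ for sufficiently large $s$ without spelling out the error terms.
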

\begin{proof}
From the definition of the set $N_{\Lambda^c}(s)=  \# \{ \lambda_{1,n} \in
\Lambda^c : \lambda_{1, n} \leq s \}$, it is not difficult to
check the identity
$$
N_{\Lambda^c}(s) = \displaystyle N_{\sigma(L(q_{1,-})) \cap
\sigma(L(q_{1,+}))} (s)
$$
for any $s>0$, which implies
$$
N_{\Lambda^c}(s) \leq \displaystyle \min\{1-x_{0}, x_{0}\}\frac{\sqrt{s}}{\pi}
\mbox{ for sufficiently large }s>0.
$$
Here the inequality is due to the following estimates:
$$
\begin{aligned}
\displaystyle N_{\sigma(L(q_{1,-})) \cap \sigma(L(q_{1,+}))} (t)
 \le& \min\{1-x_{0}, x_{0}\} N_{\sigma(L(q_{1}))} (t)   \\
 \le& \min\{1-x_{0}, x_{0}\} \frac{\sqrt{t}}{\pi}\quad \mbox{for sufficiently
large }s>0.
\end{aligned}
$$
Finally, we can obtain
\begin{equation}\label{esti-N}
N_\Lambda(s) \geq \displaystyle \left(1 - \min\{1-x_{0}, x_{0}\}
\right)\frac{\sqrt{s}}{\pi}
\end{equation}
for sufficiently large $s>0$. This completes the proof of the lemma.
\end{proof}

\subsection{Auxiliary functions}
In this part, we introduce several auxiliary functions related to the problems
\eqref{eigen-h}, \eqref{eigen-H}, \eqref{def-mu-} and \eqref{def-mu+}.
We define a function $\Delta(\lambda)$ by
\begin{equation}\label{def-Delta}
\Delta(\lambda) := -\varphi{'}(1; \lambda)- H\varphi(1; \lambda).
\end{equation}
Then from \cite{freiling2001inverse}, $\lambda_n$ is a zero of
$\Delta(\lambda)$: $\Delta(\lambda_n) = 0$.
Moreover we can calculate $\dot\Delta(\lambda_n)$ as follows
\begin{equation}\label{eq-Delta'}
\dot{\Delta}(\lambda_n) := \left(\frac{d\Delta}{d\lambda}\right)(\lambda_n)
= -k_{n}\beta_n,
\end{equation}
where
\begin{equation}\label{def-beta}
\beta_{n} := \int_0^1 |\varphi_n(x)|^2 dx.
\end{equation}

The Weyl $m_{-}-$function is defined by
\begin{equation}\label{def-m_}
m_{-}(x,\lambda) = -\frac{\varphi'(x;\lambda)}{\varphi(x;\lambda)},
\end{equation}
for any $x\in[0,1]$ (\cite{F2000inverse}).
From \cite{danielyan1991asymptotic}, we see
\begin{lem}\label{lem-m_}
Assume $q\in C[0,1]$.  Then the Weyl $m_-$-function defined in \eqref{def-m_}
admits the asymptotic expansion:
\begin{equation}\label{asymp-m}
m_-(x,\lambda) =  -i(\sqrt{\lambda})^{-1}\left(1 + o(\lambda^{-1/2})\right),
\end{equation}
uniformly in $x\in[\delta, 1-\delta]$ for $\delta>0$ as $|\lambda|\rightarrow
\infty$ in any sector $\Lambda(\varepsilon):=\{\lambda \in \mathbb C:
\varepsilon< Arg(\lambda)<\pi-\varepsilon, \varepsilon>0\}$.
\end{lem}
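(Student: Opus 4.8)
The statement is quoted from \cite{danielyan1991asymptotic}, so one option is simply to invoke that reference; the plan below instead reproves it directly from the asymptotics \eqref{asymp-phi} already recorded for $\varphi$ and $\varphi'$. The idea is to substitute \eqref{asymp-phi} into the definition \eqref{def-m_} of $m_-$ and to use the geometry of the sector $\Lambda(\varepsilon)$ to decide which of the exponentials $e^{\pm i\sqrt{\lambda}x}$ dominates. Once that is settled, the quotient $-\varphi'(x;\lambda)/\varphi(x;\lambda)$ collapses to a single explicit leading term plus a controllable remainder, uniformly for $x\in[\delta,1-\delta]$.

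First I would record the elementary sector estimate $\mathrm{Im}\,\sqrt{\lambda}\ge c(\varepsilon)\,|\sqrt{\lambda}|$ for a constant $c(\varepsilon)>0$ valid throughout $\Lambda(\varepsilon)$; together with $x\ge\delta$ this gives $|e^{-i\sqrt{\lambda}x}|=e^{x\,\mathrm{Im}\,\sqrt{\lambda}}\ge e^{\delta c(\varepsilon)|\sqrt{\lambda}|}$, so that the subdominant exponential $e^{i\sqrt{\lambda}x}$ is smaller by the exponentially small relative factor $e^{-2\delta c(\varepsilon)|\sqrt{\lambda}|}$. Consequently $\cos(\sqrt{\lambda}x)$ and $\sin(\sqrt{\lambda}x)$ each reduce to one half of their dominant exponential up to this exponentially small error, and the $O$-remainders in \eqref{asymp-phi} are relatively of order $|\sqrt{\lambda}|^{-1}$. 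Forming $-\varphi'/\varphi$ and cancelling the common factor $\tfrac{1}{2}e^{-i\sqrt{\lambda}x}$ then reduces the quotient to $\sqrt{\lambda}\,\tan(\sqrt{\lambda}x)$ up to a relative error $o(1)$; since the sector fixes the limit of $\tan(\sqrt{\lambda}x)$, this produces the leading term recorded in \eqref{asymp-m}.

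The delicate point, and the main obstacle, is the sharpness of the remainder: a crude substitution of \eqref{asymp-phi} yields only a relative error $O(\lambda^{-1/2})$, whereas \eqref{asymp-m} asserts the smaller $o(\lambda^{-1/2})$. To reach it I would use the second-order refinement of \eqref{asymp-phi}, in which the first corrections to both $\varphi$ and $\varphi'$ carry the same coefficient $h+\tfrac{1}{2}\int_0^x q(t)\,dt$; in the logarithmic-derivative ratio $-\varphi'/\varphi$ this common correction cancels, leaving only terms of order $\lambda^{-1}$, which is $o(\lambda^{-1/2})$. Verifying this cancellation while keeping all estimates uniform in $x$ is the technical heart of the argument, and it is exactly here that the restriction $x\in[\delta,1-\delta]$ is used: as $x\to 0$ the two exponentials $e^{\pm i\sqrt{\lambda}x}$ become comparable, the reduction in the second step fails, and the clean asymptotic \eqref{asymp-m} is no longer available.
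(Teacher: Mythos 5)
The paper offers no proof of this lemma at all --- it is quoted directly from \cite{danielyan1991asymptotic} --- so your self-contained derivation is necessarily a different route, and its two main ingredients are the right ones. The sector bound $\mathrm{Im}\,\sqrt{\lambda}\ \ge\ \sin(\varepsilon/2)\,|\sqrt{\lambda}|$ does make $e^{-i\sqrt{\lambda}x}$ dominant for $x\ge\delta$ up to exponentially small relative errors, and the cancellation of the common first-order correction $\omega(x)$ in the logarithmic derivative is genuine (with the paper's convention $L(q)u=-u''-qu$ one has $\omega(x)=h-\tfrac12\int_0^x q(t)\,dt$ rather than the $+$ sign you wrote, but only the fact that the \emph{same} $\omega$ appears in both corrections matters). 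Carried out, this yields $-\varphi'(x;\lambda)/\varphi(x;\lambda)=i\sqrt{\lambda}\,\bigl(1+O(\lambda^{-1})\bigr)$ uniformly for $x\in[\delta,1-\delta]$, and $O(\lambda^{-1})$ is indeed $o(\lambda^{-1/2})$; your observation that only the lower restriction $x\ge\delta$ is genuinely needed is also accurate.

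The flaw is in the final identification. Your own intermediate formula $m_-(x,\lambda)\approx\sqrt{\lambda}\tan(\sqrt{\lambda}x)$, together with $\tan(\sqrt{\lambda}x)\to i$ in the sector, gives the leading term $i\sqrt{\lambda}$, which \emph{grows}; the term $-i(\sqrt{\lambda})^{-1}$ recorded in \eqref{asymp-m} \emph{decays}, and the two are reciprocals of one another, since $1/(i\sqrt{\lambda})=-i(\sqrt{\lambda})^{-1}$. So the sentence ``this produces the leading term recorded in \eqref{asymp-m}'' is false as written --- and in fact no argument can establish \eqref{asymp-m} literally, because it is inconsistent with the definition $m_-(x,\lambda)=-\varphi'(x;\lambda)/\varphi(x;\lambda)$ combined with \eqref{asymp-phi}. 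The formula \eqref{asymp-m} is evidently a misprint for the expansion of the reciprocal, $1/m_-(x,\lambda)=-i(\sqrt{\lambda})^{-1}\bigl(1+o(\lambda^{-1/2})\bigr)$, equivalently $m_-(x,\lambda)=i\sqrt{\lambda}\bigl(1+o(\lambda^{-1/2})\bigr)$, and it is this reciprocal form that the paper actually uses in \eqref{asymp-m12} and \eqref{esti-Fy}. Your computation proves exactly that corrected statement, so the mathematics goes through; but a correct write-up must say so explicitly --- restate the lemma for $1/m_-$ (or flag the misprint) rather than assert agreement with a formula that your own calculation contradicts.
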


We next define the function $U(x,\lambda)$ by
\begin{equation}\label{def-U}
U(x;\lambda) := \left|
\begin{array}{cc}
\varphi_1(x;\lambda)  &   \varphi_2(x;\lambda)     \\
\varphi_1'(x;\lambda)  &    \varphi_2'(x;\lambda)
\end{array}
\right|.
\end{equation}
Here and henceforth
$\vert \cdot \vert$ denotes determinants of $2\times 2$-matrices under
consideration and we write
$\varphi_j(x;\lambda):=\varphi_{q_j}(x;\lambda)$, $j=1,2$.

Then we have
\begin{lem}\label{lem-U-multi2}
Let $\varphi_{1,n}$ and $\varphi_{2,m}$ be the eigenfunctions of the problem
\eqref{eigen-h} corresponding to the eigenvalues $\lambda_{1,n}$ and
$\lambda_{2,m}$ with respect to $q=q_1$ and $q=q_2$ satisfying
\begin{equation}\label{condi-varphi}
\frac{\varphi_{1,n}(1)\varphi_{1,n}(x_0)}{\|\varphi_{1,n}\|^2}
= \frac{\varphi_{2,m}(1)\varphi_{2,m}(x_0)}{\|\varphi_{2,m}\|^2}.
\end{equation}
Assume $q_1=q_2$ in $[d,1]$ with $d\le x_0$.
Then $\lambda_{1,n}$ is a zero of $ U(d;\cdot)$ with multiplicity
at least $2$ providing that $\lambda_{1,n} = \lambda_{2,m}$.
\end{lem}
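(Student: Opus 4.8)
The plan is to exploit the hypothesis $q_1 = q_2$ on $[d,1]$ together with $d\le x_0 \le 1$. On this interval both $\varphi_1(\cdot;\lambda)$ and $\varphi_2(\cdot;\lambda)$ solve one and the same equation $-y'' - q(x)y = \lambda y$ with $q:=q_1=q_2$, so the determinant $U(x;\lambda)$ in \eqref{def-U} is exactly their Wronskian on $[d,1]$. Since this equation has no first-order term, Abel's formula gives $\partial_x U(x;\lambda)\equiv 0$ for $x\in[d,1]$; hence $U(\cdot;\lambda)$ is constant on $[d,1]$, and differentiating the identity $U(x;\lambda)=U(1;\lambda)$ in $\lambda$ shows $\dot U(\cdot;\lambda)$ is constant there too. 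Thus it suffices to prove $U(1;\lambda_{1,n})=0$ and $\dot U(1;\lambda_{1,n})=0$, which then transfer to $x=d$ and yield the double zero.

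For the value I would insert the boundary relation $\varphi_j'(1;\lambda)=-\Delta_j(\lambda)-H\varphi_j(1;\lambda)$ from \eqref{def-Delta} into $U(1;\lambda)=\varphi_1(1;\lambda)\varphi_2'(1;\lambda)-\varphi_1'(1;\lambda)\varphi_2(1;\lambda)$. The terms carrying $H$ cancel, leaving the clean identity $U(1;\lambda)=\Delta_1(\lambda)\varphi_2(1;\lambda)-\Delta_2(\lambda)\varphi_1(1;\lambda)$. Writing $\lambda^*:=\lambda_{1,n}=\lambda_{2,m}$, both characteristic functions vanish, $\Delta_1(\lambda^*)=\Delta_2(\lambda^*)=0$, so $U(d;\lambda^*)=U(1;\lambda^*)=0$. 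Moreover this vanishing Wronskian means the two eigenfunctions are proportional on $[d,1]$, $\varphi_{2,m}=c\,\varphi_{1,n}$ with $c=\varphi_{2,m}(1)/\varphi_{1,n}(1)$ (here $\varphi_j(1)\ne0$, being the reciprocal of the norming constant $k_j$); evaluating at $x_0\in[d,1]$ gives $\varphi_{1,n}(x_0)\varphi_{2,m}(1)=\varphi_{2,m}(x_0)\varphi_{1,n}(1)$.

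For the derivative, differentiating the same identity and discarding the terms that carry a factor $\Delta_j(\lambda^*)=0$ leaves $\dot U(1;\lambda^*)=\dot\Delta_1(\lambda^*)\varphi_{2,m}(1)-\dot\Delta_2(\lambda^*)\varphi_{1,n}(1)$. I would then substitute \eqref{eq-Delta'} and \eqref{def-beta}, i.e. $\dot\Delta_1(\lambda^*)=-k_{1,n}\beta_{1,n}=-\|\varphi_{1,n}\|^2/\varphi_{1,n}(1)$ and likewise $\dot\Delta_2(\lambda^*)=-\|\varphi_{2,m}\|^2/\varphi_{2,m}(1)$, to obtain
$$
\dot U(1;\lambda^*)=\varphi_{1,n}(1)\varphi_{2,m}(1)\left(\frac{\|\varphi_{2,m}\|^2}{\varphi_{2,m}(1)^2}-\frac{\|\varphi_{1,n}\|^2}{\varphi_{1,n}(1)^2}\right).
$$
It remains to kill the bracket. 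Using the proportionality relation at $x_0$ derived above to eliminate $\varphi_{1,n}(x_0),\varphi_{2,m}(x_0)$, the matching condition \eqref{condi-varphi} reduces precisely to $\|\varphi_{2,m}\|^2\varphi_{1,n}(1)^2=\|\varphi_{1,n}\|^2\varphi_{2,m}(1)^2$, which is exactly the vanishing of the bracket. Hence $\dot U(d;\lambda^*)=0$ and $\lambda_{1,n}$ is a zero of $U(d;\cdot)$ of multiplicity at least $2$.

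The step I expect to be delicate is this last reduction: cancelling $\varphi_{1,n}(x_0)$ (equivalently $\varphi_{2,m}(x_0)$) in \eqref{condi-varphi} requires $\varphi_{1,n}(x_0)\ne0$, that is $\lambda_{1,n}\in\Lambda$ in the notation of \eqref{def-Lambda}, which is exactly the regime in which the lemma is to be applied; it is worth recording this, since when $\varphi_{1,n}(x_0)=0$ both sides of \eqref{condi-varphi} collapse to $0$ and furnish no information about the norming data. Everything else is routine once the Wronskian-constancy observation and the $\Delta_j$-rewriting of $U$ are in place.
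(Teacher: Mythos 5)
Your proof is correct and follows essentially the same route as the paper's: both reduce everything to $x=1$ by noting that $q_1=q_2$ on $[d,1]$ makes $U(\cdot;\lambda)$ constant there (identically in $\lambda$, so the identity may be differentiated in $\lambda$), both obtain $U(1;\lambda_{1,n})=0$ from the Robin condition satisfied by the two eigenfunctions, and both obtain $\dot U(1;\lambda_{1,n})=0$ by combining the norming-constant formula \eqref{eq-Delta'} with the matching condition \eqref{condi-varphi} and the proportionality of $\varphi_{1,n}$ and $\varphi_{2,m}$ on $[d,1]$ (you get the proportionality from the vanishing Wronskian, the paper from uniqueness of the Cauchy problem for $\psi_j$ at $x=1$; these are interchangeable). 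Your packaging through the identity $U(1;\lambda)=\Delta_1(\lambda)\varphi_2(1;\lambda)-\Delta_2(\lambda)\varphi_1(1;\lambda)$, differentiated at the common eigenvalue, is somewhat cleaner than the paper's computation, which rewrites the rows of $\dot U$ via $\varphi_j=\psi_j/k_j$ and massages the determinants into the relation \eqref{eq-kn}. Finally, the delicate point you flag is real and is shared by the paper: its passage from $\psi_1(x_0;\lambda_{1,n})/(k_{1,n}^2\beta_{1,n})=\psi_2(x_0;\lambda_{2,m})/(k_{2,m}^2\beta_{2,m})$ and $\psi_1(x_0)=\psi_2(x_0)$ to $k_{1,n}^2\beta_{1,n}=k_{2,m}^2\beta_{2,m}$ silently divides by $\psi_1(x_0)$, i.e., assumes $\varphi_{1,n}(x_0)\neq 0$; this is harmless in context, since the lemma is only invoked for $\lambda_{1,n}\in\Lambda$ through the function $F=U(d;\cdot)/g_\Lambda^2$, but your explicit recording of the hypothesis is an improvement on the paper's presentation.
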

\begin{proof}
It is sufficient to show that $U(d,\lambda_{1,n})=\dot U(d,\lambda_{1,n})=0$.
For this, using the assumption on $q_1(x) = q_2(x)$ for $x \in [d,1]$, we can
show that
\begin{equation}\label{eq-U}
\begin{aligned}
U(d;\lambda) = & U(1;\lambda) - \int_d^1 \frac{d}{dx}U(x;\lambda) dx \\
 = & U(1;\lambda) + \int_d^1 \left(q_1 - q_2\right)(x)(\varphi_1
\cdot \varphi_2)(x;\lambda)dt = U(1;\lambda) .
\end{aligned}
\end{equation}
From \eqref{eq-U} and the definition of $\varphi_{j}$ in \eqref{eigen-h} and
\eqref{eigen-H}, $j=1,2$, it follows that
\begin{equation}\label{multi-1}
\begin{aligned}
U(d;\lambda_{1,n})
& =
\left| \begin{array}{cc}
    \varphi_1(1;\lambda_{1,n}) &   \varphi_2(1;\lambda_{1,n})     \\
    \varphi_1'(1;\lambda_{1,n}) &    \varphi_2'(1;\lambda_{1,n})
\end{array} \right| \\
& =
\left| \begin{array}{cc}
    \varphi_{1}(1;\lambda_{1,n}) &   \varphi_{2}(1;\lambda_{1,n})     \\
    -H\varphi_{1}(1;\lambda_{1,n}) &    -H\varphi_{2}(1;\lambda_{1,n})
\end{array} \right|  = 0
\end{aligned}
\end{equation}
is valid for any eigenvalue $\lambda_{1,n}$ of the problem \eqref{eq-sdop}.
Therefore, we finish the proof of $U(d;\lambda_{1,n}) = 0$.

Furthermore, noting the linear dependency \eqref{linear_dependent}
and the definition $\beta_n$ in \eqref{def-beta}, we obtain
$$
\frac{\varphi_j(x;\lambda_{j,n})}{\beta_{j, n}^{1/2}} =
\frac{\psi_j(x;\lambda_{j, n})}{k_{j,n} \beta_{j, n}^{1/2}},\quad j = 1, 2.
$$
Therefore, by the assumption \eqref{condi-varphi}, we see
$$
 \frac{\psi_{1}(x_0;\lambda_{1,n})}{k_{1,n}^{2} \cdot \beta_{1,n}}
 = \frac{\psi_{2}(x_0;\lambda_{2, m})}{k_{2, m}^{2} \cdot \beta_{2, m}}.
$$
In addition, from $\lambda_{1,n} = \lambda_{2,m}$ and $q_1(\cdot)
= q_2(\cdot)$ in $[d, 1]$, and $x_{0} \geq d$, we conclude from the uniqueness
of the Cauchy problem \eqref{eigen-H} that
$$
\psi_{1}(x;\lambda_{1,n}) =  \psi_{2}(x;\lambda_{1,n}),\quad x\in[x_0,1].
$$
Hence
$$
k_{1,n}^{2} \cdot \beta_{1,n} = k_{2, m}^{2} \cdot \beta_{2, m},
$$
with which \eqref{eq-Delta'} finally implies that
\begin{equation}\label{eq-kn}
k_{1,n} \left( \dot{\varphi}_1'(1;\lambda_{1,n})
+ H \dot{\varphi}_1(1;\lambda_{1,n}) \right)
= k_{2,m}\left(\dot{\varphi}_2'(1;\lambda_{1,n})
+ H \dot{\varphi}_2(1;\lambda_{1,n})\right).
\end{equation}

Now noting the definition of $\dot U$, it follows from the linear dependency
\eqref{linear_dependent} that
\begin{align*}
\dot{U}(d,\lambda_{1,n}) =&
\left|
\begin{array}{cc}
    \dot{\varphi}_1(1;\lambda_{1,n}) &   \dot{\varphi}_2(1;\lambda_{1,n})    \\
    \varphi_1'(1;\lambda_{1,n}) &    \varphi_2'(1;\lambda_{1,n})
\end{array}
\right|
+ \left|
\begin{array}{cc}
    \varphi_1(1;\lambda_{1,n}) &   \varphi_2(1;\lambda_{1,n})     \\
    \dot{\varphi}_1'(1;\lambda_{1,n}) &    \dot{\varphi}_2'(1;\lambda_{1,n})
\end{array}
\right|
\\
= &\left|
\begin{array}{cc}
    \dot{\varphi}_1(1;\lambda_{1,n}) &   \dot{\varphi}_2(1;\lambda_{1,n})    \\
    \frac{\psi_1'(1;\lambda_{1,n})}{k_{1,n}} &
\frac{\psi_2'(1;\lambda_{1,n})}{k_{2,n}}
\end{array}
\right|
 + \left|
\begin{array}{cc}
    \frac{\psi_1(1;\lambda_{1,n})}{k_{1,n}} &
\frac{\psi_2(1;\lambda_{1,n})}{k_{2,n}} \\
    \dot{\varphi}_1'(1;\lambda_{1,n}) &    \dot{\varphi}_2'(1;\lambda_{1,n})
\end{array}
\right|,
\end{align*}
from which we further see by using the boundary conditions of
$\psi_j$, $j=1,2$ that
\begin{equation}\label{multi-2}
\begin{aligned}
\dot U(d,\lambda_{1,n}) &=
\left|
\begin{array}{cc}
    \dot{\varphi}_1(1;\lambda_{1,n}) &   \dot{\varphi}_2(1;\lambda_{1,n})    \\
    \frac{-H}{k_{1,n}} &    \frac{-H}{k_{2,n}}
\end{array}
\right|
 + \left|
\begin{array}{cc}
    \frac{1}{k_{1,n}} &   \frac{1}{k_{2,n}} \\
    \dot{\varphi}_1'(1;\lambda_{1,n}) &    \dot{\varphi}_2'(1;\lambda_{1,n})
\end{array}
\right|
\\
&= \left|
\begin{array}{cc}
    \frac{1}{k_{1,n}} &   \frac{1}{k_{2,n}} \\
    H\varphi_1'(1;\lambda_{1,n}) + \dot{\varphi}_1'(1;\lambda_{1,n})
&    H\varphi_2'(1;\lambda_{1,n}) + \dot{\varphi}_2'(1;\lambda_{1,n})
\end{array}
\right| = 0,
\end{aligned}
\end{equation}
where the last line is due to \eqref{eq-kn}. These two identities
\eqref{multi-1} and \eqref{multi-2} illustrate that $\lambda_{1,n}$
is a zero of $U(d,\cdot)$ with multiplicity at least $2$.
We complete the proof of the lemma.
\end{proof}

Recalling the eigenvalue $\lambda_{1,n}\in \sigma(L(q_1))$ defined in
\eqref{eq-sdop}, we define
$$
g_{\sigma}(\lambda) = \prod_{n=0}^{\infty} \left(1 -
\frac{\lambda}{\lambda_{1,n}}\right),\quad \lambda\in\mathbb C,
$$
and on the basis of the set $\Lambda$ in \eqref{def-Lambda}, we further
introduce the function $g_\Lambda$ as follows:
$$
g_\Lambda(\lambda) := \prod_{\lambda_{1,n} \in \Lambda}
\left(1-\frac{\lambda}{\lambda_{1,n}}\right),\quad \lambda\in\mathbb C.
$$

Based on the above two functions, we further consider the function
\begin{equation}\label{def-F}
F(\lambda) = \frac{U(d;\lambda)}{g_\Lambda^2(\lambda)}.
\end{equation}
From the asymptotic expansion of the function $m_-$ in Lemma \ref{lem-m_},
we obtain the following lemma.
\begin{lem}\label{lem-F}
Under the same assumptions in Lemma \ref{lem-U-multi2}, the function
$F(\cdot)$ defined by \eqref{def-F} is an entire function satisfying
$$
\left|F(\lambda)\right|
\leq  C e^{C|\lambda|^{\frac12}}\quad \mbox{for all $|\lambda|
= \pi^2\left(n+\frac12\right)^2$} .
$$
Moreover, if $\lambda=iy$, $y\in\mathbb R$, then
$F(iy)$ admits the asymptotic behavior
$$
F(iy)\to 0,\mbox{ as }y\to\infty.
$$
\end{lem}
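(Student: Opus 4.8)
The plan is to prove the two assertions separately, exploiting the fact that both the numerator $U(d;\cdot)$ and the denominator $g_\Lambda$ are entire functions of order $1/2$. First I would establish that $F$ is entire. The numerator $U(d;\lambda)$ is entire because it is built from the functions $\varphi_j(x;\lambda)$ and $\varphi_j'(x;\lambda)$, which are entire in $\lambda$, while $g_\Lambda^2$ has a double zero at each $\lambda_{1,n}\in\Lambda$ and no other zeros. By Lemma \ref{lem-U-multi2}, under the present hypotheses each such $\lambda_{1,n}$ is a zero of $U(d;\cdot)$ of multiplicity at least $2$. Hence every zero of $g_\Lambda^2$ is cancelled, so $F$ extends holomorphically across each $\lambda_{1,n}\in\Lambda$; away from $\Lambda$ the denominator does not vanish, and therefore $F$ is entire.

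For the growth bound on the circles $|\lambda|=\pi^2(n+\tfrac12)^2$, I would bound numerator and denominator separately. By \eqref{eq-U} we have $U(d;\lambda)=U(1;\lambda)$, and the asymptotics \eqref{asymp-phi} give $|U(1;\lambda)|\le C|\sqrt\lambda|e^{2|\mathrm{Im}\sqrt\lambda|}\le Ce^{C|\lambda|^{1/2}}$ globally. For the denominator I would write $g_\sigma=g_\Lambda\, g_{\Lambda^c}$, where $g_{\Lambda^c}(\lambda)=\prod_{\lambda_{1,n}\in\Lambda^c}(1-\lambda/\lambda_{1,n})$. Since $g_\sigma(\lambda)=\Delta(\lambda)/\Delta(0)$ with $\Delta$ defined by \eqref{def-Delta} and $\Delta(\lambda)\sim\sqrt\lambda\sin\sqrt\lambda$, the classical estimate $|\sin\sqrt\lambda|\ge ce^{|\mathrm{Im}\sqrt\lambda|}$ valid on the circles $|\lambda|=\pi^2(n+\tfrac12)^2$ (which lie midway between the eigenvalues $\approx k^2\pi^2$) yields $|g_\sigma(\lambda)|\ge ce^{|\mathrm{Im}\sqrt\lambda|}$ there. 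As $g_{\Lambda^c}$ is a canonical product over a subset of $\sigma(L(q_1))$, it is of order $1/2$ and exponential type at most $1$, so $|g_{\Lambda^c}(\lambda)|\le Ce^{C|\lambda|^{1/2}}$ globally. Dividing gives $|g_\Lambda(\lambda)|=|g_\sigma(\lambda)|/|g_{\Lambda^c}(\lambda)|\ge ce^{-C|\lambda|^{1/2}}$ on the circles, whence $|F(\lambda)|=|U(d;\lambda)|/|g_\Lambda(\lambda)|^2\le Ce^{C|\lambda|^{1/2}}$.

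For the decay on the imaginary axis I would use the Weyl $m_-$-function. Writing $\varphi_j'(d;\lambda)=-m_{-,j}(d,\lambda)\varphi_j(d;\lambda)$ with $m_{-,j}$ the function \eqref{def-m_} attached to $q_j$, the determinant factorizes as $U(d;\lambda)=\varphi_1(d;\lambda)\varphi_2(d;\lambda)\bigl(m_{-,1}(d,\lambda)-m_{-,2}(d,\lambda)\bigr)$. The decisive point is that by Lemma \ref{lem-m_} the leading term $-i(\sqrt\lambda)^{-1}$ of $m_{-,j}$ is independent of $j$, so it cancels and $m_{-,1}-m_{-,2}=o(\lambda^{-1})$ in the sector containing the imaginary axis. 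Combined with $|\varphi_j(d;iy)|\le Ce^{d\sqrt{y/2}}$ this gives $|U(d;iy)|\le Ce^{2d\sqrt{y/2}}o(1/y)$. For the denominator, every factor of $g_\Lambda(iy)$ has modulus $\ge1$, and I would turn the counting estimate \eqref{esti-N}, namely $N_\Lambda(t)\ge\max\{1-x_0,x_0\}\sqrt t/\pi$, into an exponential lower bound through $\log|g_\Lambda(iy)|=\int_0^\infty N_\Lambda(t)\,\tfrac{y^2}{t(t^2+y^2)}\,dt$, obtaining $\log|g_\Lambda(iy)|\ge\max\{1-x_0,x_0\}\sqrt{y/2}-o(\sqrt y)$. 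Since $d\le x_0\le\max\{1-x_0,x_0\}$ under the standing assumption, the exponential factors combine into $e^{2(d-\max\{1-x_0,x_0\})\sqrt{y/2}}\le1$, forcing $F(iy)\to0$.

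The hard part will be matching the exponential orders precisely in the critical case $d=\max\{1-x_0,x_0\}$, which forces $d=x_0\ge\tfrac12$. There the competing exponentials cancel exactly, and the decay of $F(iy)$ must be extracted from the surviving polynomial factor $o(1/y)$ against the subexponential error $o(\sqrt y)$ in the lower bound for $g_\Lambda$; this demands sharpening the relation between the counting function $N_\Lambda$ and the exponential type of $g_\Lambda$ on the imaginary axis. A secondary technical obstacle is that the lower bound for $g_\Lambda$ on the circles must hold uniformly in $\arg\lambda$, so the canonical-product estimates for $g_\sigma$ and $g_{\Lambda^c}$ have to be controlled jointly rather than only near the real or imaginary directions.
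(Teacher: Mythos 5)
Your architecture coincides with the paper's proof: entireness via the double-zero property of Lemma \ref{lem-U-multi2}; the circle bound by pairing a lower bound for $g_\sigma$ on $|\lambda|=\pi^2(n+\frac12)^2$ with an order-$\frac12$ upper bound on the complementary product (the paper writes this as $F=\frac{U}{g_\sigma^2}\cdot\frac{g_\sigma^2}{g_\Lambda^2}$ and cites Theorem B.2 of the Gesztesy--Simon paper); and decay along $i\mathbb{R}$ from cancellation of the leading Weyl term plus a counting-function lower bound on $g_\Lambda$. The differences are confined to the last step. You factor $U(d;\lambda)=\varphi_1\varphi_2\,(m_{-,1}-m_{-,2})$, while the paper uses $U(d;\lambda)=\varphi_1'\varphi_2'\bigl(\frac{1}{m_{2,-}}-\frac{1}{m_{1,-}}\bigr)$ as in \eqref{eq-F}; these are algebraically equivalent, but be aware that Lemma \ref{lem-m_} as printed (with $m_-$ decaying like $|\lambda|^{-1/2}$) is inconsistent with \eqref{def-m_} and \eqref{asymp-phi}, which force $|m_-(d;iy)|\sim|y|^{1/2}$; on the consistent reading your difference $m_{-,1}-m_{-,2}$ is only $o(1)$, not $o(\lambda^{-1})$, whereas the reciprocal difference used by the paper is $o(1/|y|)$ as in \eqref{asymp-m12}. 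This costs you a power of $y$ but not the conclusion. More substantively, you bound $|g_\Lambda(iy)|$ below with the exponent $\theta:=\max\{1-x_0,x_0\}$ coming from \eqref{esti-N}, whereas the paper proves $|g_\Lambda(iy)|\ge C|g_\sigma(iy)|^{d}$ in \eqref{asymp-g}, i.e.\ with exponent $d$ --- exactly the exponential type of the numerator --- so in \eqref{esti-Fy} the exponentials cancel identically for every admissible $(d,x_0)$ and no critical case ever arises.

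The one real defect in your write-up is the unresolved ``critical case'' $d=\theta$ (equivalently $d=x_0\ge\frac12$), which is permitted by the hypotheses of Lemma \ref{lem-U-multi2} and therefore cannot be left open. But the obstruction you describe is an artifact of loose bookkeeping, not a genuine difficulty: the error in your lower bound for $\log|g_\Lambda(iy)|$ is $O(1)$, not $o(\sqrt y)$. Indeed, \eqref{esti-N} supplies $t_0>0$ with $N_\Lambda(t)\ge\frac{\theta}{\pi}\sqrt t$ for $t\ge t_0$, and since $N_\Lambda\ge 0$ and $\frac{y^2}{t^2+y^2}\le 1$ on $(0,t_0)$,
\[
\log|g_\Lambda(iy)|=\int_0^\infty N_\Lambda(t)\,\frac{y^2}{t(t^2+y^2)}\,dt
\ \ge\ \frac{\theta}{\pi}\int_0^\infty \frac{y^2\,dt}{\sqrt t\,(t^2+y^2)}
-\frac{\theta}{\pi}\int_0^{t_0}\frac{dt}{\sqrt t}
\ =\ \theta\sqrt{\tfrac y2}-\frac{2\theta\sqrt{t_0}}{\pi},
\]
where the full integral equals $\frac{\pi}{\sqrt2}\sqrt y$. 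Hence $|g_\Lambda(iy)|^2\ge c\,e^{2\theta\sqrt{y/2}}\ge c\,e^{2d\sqrt{y/2}}$ with a fixed constant $c>0$, the exponentials cancel up to a multiplicative constant even when $d=\theta$, and the $o(1)$ factor from the $m$-function difference already forces $F(iy)\to 0$. (Your secondary worry about uniformity in $\arg\lambda$ on the circles is likewise harmless: the estimate $|\sin\sqrt\lambda|\ge ce^{|\mathrm{Im}\sqrt\lambda|}$ on $|\lambda|=\pi^2(n+\frac12)^2$ and the global bound on the complementary product are both uniform in argument.) With this one correction your argument closes and is, in substance, the paper's proof.
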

\begin{proof}
We first assert that $F(\cdot)$ is an entire function. Indeed, from the
expression of the function $g_\Lambda$, we see that the zeros of $g_\Lambda^2$
are also zeros of the function $U$. Now since $\lambda_{1,n} \in \Lambda$ is
a zero of $U(d;\lambda)$ with multiplicity at least $2$,
noting that $g_\Lambda$ has only a simple zero point, we can see
that all the isolated singularity points are removable. Therefore $F(s)$ is
analytic on the whole complex plane.

Now from the asymptotic behavior of $\varphi(d;\lambda)$ and
$\varphi'(d;\lambda)$ (e.g., \cite{Levitan1991sturm}),
and Theorem B.2 in \cite{F2000inverse}, we conclude that there exist constants $C_1, C_2>0$ such that
\begin{equation}\label{esti-F}
\begin{aligned}
\left|F(\lambda)\right|
=& \left|\frac{U(d;\lambda)}{g_{\sigma}^2(\lambda)}\cdot\frac{g_{\sigma}^2
(\lambda)}{g_\Lambda^2(\lambda)}\right|  \\
=& \left| \frac{\varphi_1(d;\lambda) \varphi_2'(d;\lambda)
- \varphi_2(d;\lambda) \varphi_1'(d;\lambda)}{g_{\sigma}^2(\lambda)}\right|
\cdot \frac{g_{\sigma}^2(\lambda)}{g_\Lambda^2(\lambda)}
\leq  C_1 e^{C_2|\lambda|^{1/2}}
\end{aligned}
\end{equation}
for any $\lambda$ with $|\lambda|=\pi^2 (n+\frac12)^2$.
In addition, from the definitions of $U(x;\lambda)$ and the Weyl function
$m_-(x;\lambda)$ in \eqref{def-U} and (3.12),
%\eqref{def-mu-},
it follows that
\begin{align*}
U(d;\lambda)
& = \varphi_1(d;\lambda) \varphi_2'(d;\lambda) - \varphi_2(d;\lambda)
\varphi_1'(d;\lambda) \\
& = \frac{-\varphi_1'(d;\lambda)}{m_{1,-}(d;\lambda)}\varphi_2'(d;\lambda)
+ \frac{\varphi_2'(d;\lambda)}{m_{2,-}(d;\lambda)}\varphi_1'(d;\lambda) \\
& = \varphi_1'(d;\lambda) \varphi_2'(d;\lambda) \left(\frac{1}{m_{2,-}
(d;\lambda)} - \frac{1}{m_{1,-}(d;\lambda)} \right).
\end{align*}
Consequently, also by the notation of $F(\lambda)$, we obtain
\begin{equation}\label{eq-F}
F(\lambda) = \frac{\varphi_1'(d;\lambda) \varphi_2'(d;\lambda)}
{g_\Lambda^2(\lambda)}\left(\frac1{m_{2,-}(d;\lambda)}
- \frac1{m_{1,-}(d;\lambda)}\right).
%\eqno{(3.20)}
\end{equation}
Moreover, applying Hadamard's factorization theorem (e.g., p.12 in  \cite{freiling2001inverse})
to (3.9), we see
$$
g_{\sigma}(\lambda) =
C\left(-\varphi'_1(1;\lambda) - H \varphi_1(1;\lambda)\right).
$$
Moreover, the definitions of $g_{\sigma}$ and $g_\Lambda$  yields
$$
g_\Lambda(\lambda)
= \frac{g_{\sigma}(\lambda)}{ \prod_{\lambda_{1,n} \in \Lambda^c}
\left(1-\frac{\lambda}{\lambda_{1,n}}\right)}.
$$
By using a similar argument as in \cite[pp. 2784]{F2000inverse}, it follows
from \eqref{asymp-phi} and \eqref{esti-N} that
\begin{equation}\label{asymp-g}
\left|g_\Lambda(iy)\right| \geq C \left| g_{\sigma}(iy)\right|^d
\geq C |y|^{\frac d2} \exp\left\{d {\rm Im}(\sqrt{i})|y|^{\frac12} \right\},
\quad \mbox{as }y\to\infty. %\left(1 + o(1)\right)
\end{equation}
Furthermore, from the asymptotic \eqref{asymp-phi} of the eigenfunctions,
we obtain
\begin{equation}\label{asymp-phi'}
\left|\varphi_j'(d;iy)\right| \le C |y|^{\frac12}
\exp\left\{d {\rm Im} (\sqrt{i})|y|^{\frac12} \right\}, \quad j=1,2.
\end{equation}
Moreover, from the asymptotic \eqref{asymp-m} of the $m_{-}-$functions, we have
\begin{equation}\label{asymp-m12}
\left| \frac1{m_{2,-}(d;iy)} - \frac1{m_{1,-}(d;iy)} \right|
= o\left(\frac1{|y|}\right),  \quad \mbox{as }y\to\infty.
%\eqno{(3.24)}
\end{equation}
Consequently, by \eqref{eq-F} and collecting all the above estimates
\eqref{asymp-g}--\eqref{asymp-m12}, we infer that
\begin{equation}\label{esti-Fy}
\begin{aligned}
\left|F(iy)\right| =& \displaystyle   \frac{\left|\varphi_1'(d; iy)
\varphi_2'(d;iy)\right|}{\left|g_{\Lambda}^2(iy)\right|}\left|
\frac1{m_{2,-}(d;iy)} - \frac1{m_{1,-}(d;iy)}\right|   \\
\leq & C \frac{|y|}{|y|^d} \times \frac{\exp\left\{2d {\rm Im}
(\sqrt{i})|y|^{\frac12} \right\}} { \exp \left\{2d {\rm Im} (\sqrt{i})
|y|^{\frac12} \right\}} \times o\left(\frac1{|y|} \right) = o(1),
\quad \mbox{as }y\to\infty.
\end{aligned}
\end{equation}
This completes the proof of the lemma.
\end{proof}

\subsection{Solution representation of forward problem}
Let $e_{n}(x; q) := \varphi_{ n}(x; q) / \| \varphi_{n}(\cdot\,;q) \|$ and
let $\{\lambda_{n}(q), e_{n}(x; q) \}_{n\in \mathbb{N}\cup \{0\}}$
be the eigensystem of the operator $L(q)$.
We have the following two lemmata by \cite{Rundell2021uniqueness}, which are
crucial in our subsequent arguments.
\begin{lem}\label{lem-sol}
Let $(q,h,H)\in\mathcal A$ and $\eta\in H_{\alpha}(0,T)$. Then the initial
boundary value problem \eqref{eq-gov'} admits a unique solution
$u\in H_\alpha(0, T; L^{2}(0,1)) \cap L^2(0, T; H^2(0,1))$ which can be
represented as
$$
u(x,t) =\ \sum_{n=0}^{\infty} \left(\int_0^t s^{\alpha-1}
E_{\alpha,\alpha}( -\lambda_n s^\alpha) \eta(t-s) ds \right) e_{n}(x;q)
e_{n}(1;q),\quad (x,t)\in(0,1)\times(0,T).
$$
\end{lem}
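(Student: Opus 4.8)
The plan is to obtain the representation by projecting the equation \eqref{eq-gov'} onto the orthonormal eigenbasis $\{e_n(\cdot\,;q)\}_{n\in\NZ}$ of the self-adjoint operator $L(q)$, thereby reducing the problem to a family of scalar fractional ordinary differential equations. Writing $u_n(t) := (u(\cdot,t), e_n(\cdot\,;q))_{L^2(0,1)}$, I would first establish that, for a solution $u$ in the asserted class, each coefficient $u_n$ satisfies a fractional Cauchy problem whose inhomogeneity is generated entirely by the boundary datum $\eta$ at $x=1$.

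The key computation is the integration by parts that transfers $\partial_x^2 u + qu$ onto $e_n$. Integrating by parts twice and using $e_n'' + q e_n = -\lambda_n e_n$ yields
\begin{equation*}
(\partial_x^2 u + qu, e_n)_{L^2(0,1)}
= \bigl[u'(x,t)e_n(x) - u(x,t)e_n'(x)\bigr]_{x=0}^{x=1} - \lambda_n u_n(t).
\end{equation*}
Because $u$ and $e_n$ share the Robin condition $\partial_x(\cdot)(0) - h(\cdot)(0)=0$ at the left endpoint, the boundary contribution at $x=0$ cancels; at $x=1$ the condition $\partial_x u(1,t)+Hu(1,t)=\eta(t)$ for $u$ and $e_n'(1)=-He_n(1)$ for $e_n$ leave exactly $\eta(t)e_n(1)$. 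Since $\partial_t^\alpha$ commutes with the $L^2(0,1)$-pairing, this produces
\begin{equation*}
\partial_t^\alpha u_n(t) + \lambda_n u_n(t) = \eta(t)e_n(1), \qquad u_n(0)=0.
\end{equation*}
Solving this scalar fractional Cauchy problem by the standard Mittag-Leffler convolution (cf. \cite{GLY15,Kubica2020introduction}) gives $u_n(t) = e_n(1)\int_0^t s^{\alpha-1}E_{\alpha,\alpha}(-\lambda_n s^\alpha)\eta(t-s)\,ds$, and reassembling $u=\sum_n u_n e_n$ then yields exactly the stated series.

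For existence and the regularity $u\in H_\alpha(0,T;L^2(0,1))\cap L^2(0,T;H^2(0,1))$, I would proceed in reverse: define $u$ by the series and show it converges in both spaces. The crucial ingredient is the decay estimate $|E_{\alpha,\alpha}(-\lambda_n s^\alpha)|\le C/(1+\lambda_n s^\alpha)$ together with the spectral asymptotics $\lambda_n = O(n^2\pi^2)$ recorded after \eqref{eq-sdop}; combined with Young's inequality for the convolution in $t$ and the hypothesis $\eta\in H_\alpha(0,T)=J^\alpha L^2(0,T)$, these control $\sum_n \lambda_n^2\|u_n\|_{L^2(0,T)}^2$, which is what the $L^2(0,T;H^2)$-norm requires, and likewise the $H_\alpha(0,T;L^2)$-norm. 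An equivalent and perhaps cleaner route is to remove the inhomogeneous datum first: pick a fixed smooth lift $w$ with $w'(0)-hw(0)=0$ and $w'(1)+Hw(1)=1$, set $v:=u-w\,\eta$, and observe that $v$ solves \eqref{eq-gov'} with homogeneous Robin conditions and an $L^2(0,T;L^2(0,1))$ source $(w''+qw)\eta - w\,\partial_t^\alpha\eta$ (here $\partial_t^\alpha\eta=(J^\alpha)^{-1}\eta\in L^2(0,T)$); the well-posedness theory for the homogeneous-boundary fractional diffusion equation of \cite{Rundell2021uniqueness} then applies, and uniqueness follows from linearity since a solution with $\eta\equiv0$ has all coefficients $u_n\equiv0$.

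The main obstacle is not the formal algebra but the rigorous justification of the termwise manipulations in the weak class $H_\alpha(0,T;L^2(0,1))$: namely, that $\partial_t^\alpha$ genuinely commutes with the $L^2(0,1)$-inner product for such $u$, that the traces $u(j,t)$ and $\partial_x u(j,t)$ entering the integration by parts are well defined and satisfy the stated identities a priori, and that the resulting series converges in $L^2(0,T;H^2(0,1))$ rather than merely in $L^2(0,T;L^2(0,1))$. The $L^2(0,T;H^2)$ convergence is the sharpest point, since it forces one to extract the full decay of $E_{\alpha,\alpha}$ against the growth of $\lambda_n$ while only assuming $\eta\in H_\alpha(0,T)$; this is precisely where I would lean on the Mittag-Leffler estimates and the trace and regularity results of \cite{Kubica2020introduction,Rundell2021uniqueness}.
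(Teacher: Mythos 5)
The paper gives no proof of this lemma at all---it simply defers to \cite{Rundell2021uniqueness}, where the result is established by exactly the route you follow: expansion in the eigenbasis of $L(q)$, integration by parts producing the boundary term $\eta(t)e_n(1)$, solution of the scalar fractional ODEs by Mittag-Leffler convolution, and a lifting of the inhomogeneous Robin datum so that the homogeneous-boundary well-posedness theory applies. Your reconstruction is correct and matches that approach, including the honest identification of where the real work lies (commuting $\partial_t^\alpha$ with the spectral projections in $H_\alpha(0,T;L^2(0,1))$, justifying the traces, and the $L^2(0,T;H^2(0,1))$ convergence of the series), which is precisely what the cited reference supplies.
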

\begin{lem}\label{lem-K}
The series
\begin{equation}\label{def-K}
K(x,t):=\sum_{n=0}^{\infty}e_{n}(x;q)e_{n}(1;q)\int_{0}^{t}s^{\alpha-1}
E_{\alpha,\alpha}(-{\lambda_{n}}s^{\alpha})ds
\end{equation}
is uniformly convergent for $(x,t)\in[0,1]\times[0,T]$, and $K(x,\cdot)\in
L^\infty(0,\infty)$ for any $x\in[0,1]$.

Moreover, the function $K$ is analytic in $t>0$ for arbitrarily fixed
$x\in[0,1]$.
For the solution $u(x,t)$ of \eqref{eq-gov'} in Lemma
\ref{lem-sol}, we have
$$
\int _0^t u(x,s)ds = \left(K(x,\cdot) * \eta\right)(t)\quad\mbox{for
all $(x,t)\in[0,1] \times [0,T]$}.
$$
Here $*$ denotes the Laplacian convolution operation.
\end{lem}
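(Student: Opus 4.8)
The plan is to reduce the inner time integral in \eqref{def-K} to a single Mittag-Leffler function, dominate each summand by the spectral decay, and then read off uniform convergence, boundedness, analyticity, and the convolution identity in turn. First I would record the elementary identity $\frac{d}{dt}E_{\alpha,1}(-\lambda t^{\alpha}) = -\lambda t^{\alpha-1}E_{\alpha,\alpha}(-\lambda t^{\alpha})$, which follows directly from the power-series definitions. Integrating it gives, for $\lambda_{n}>0$,
$$
K_{n}(t) := \int_{0}^{t} s^{\alpha-1}E_{\alpha,\alpha}(-\lambda_{n}s^{\alpha})\,ds
= \frac{1}{\lambda_{n}}\bigl(1 - E_{\alpha,1}(-\lambda_{n}t^{\alpha})\bigr),
$$
while a possible zero eigenvalue $\lambda_{0}=0$ contributes $K_{0}(t)=t^{\alpha}/\Gamma(1+\alpha)$. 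Using the standard bound $0\le E_{\alpha,1}(-x)\le 1$ for $x\ge 0$ and $\alpha\in(0,1)$, I get $0\le K_{n}(t)\le \lambda_{n}^{-1}$ for all $t>0$, a bound that is uniform in $t$. Combined with the uniform eigenfunction estimate $|e_{n}(x;q)|\le C$ on $[0,1]$ (read off from the normalized asymptotics \eqref{asymp-phi}, whose leading term $\cos(\sqrt{\lambda_{n}}x)$ is bounded since $\lambda_{n}\ge 0$) and the Weyl asymptotics $\lambda_{n}=O(n^{2}\pi^{2})$, each summand of \eqref{def-K} is dominated by $C\lambda_{n}^{-1}\le Cn^{-2}$ for $n\ge 1$.

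The Weierstrass M-test then delivers the uniform convergence on $[0,1]\times[0,T]$ at once, and, because the dominating series $\sum C\lambda_{n}^{-1}$ is independent of $t$, it gives $K(x,\cdot)\in L^{\infty}(0,\infty)$ for each fixed $x$ (the isolated term $n=0$ being bounded once the lowest eigenvalue is positive; if $\lambda_{0}=0$ it contributes only $e_{0}(x;q)e_{0}(1;q)\,t^{\alpha}/\Gamma(1+\alpha)$, which stays bounded on every finite interval). For the analyticity in $t>0$ I would note that each $K_{n}$ extends holomorphically to a complex neighborhood of $(0,\infty)$, since $t\mapsto t^{\alpha}$ is holomorphic off the branch cut and $E_{\alpha,1}$ is entire; upgrading the bound on $K_{n}$ from the real axis to a small sector about the positive reals keeps the M-test in force, so that the locally uniform limit $K(x,\cdot)$ is holomorphic in $t>0$ by Weierstrass's theorem.

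Finally, for the convolution identity I would write the solution of Lemma \ref{lem-sol} as $u(x,t)=\sum_{n}(k_{n}*\eta)(t)\,e_{n}(x;q)e_{n}(1;q)$ with $k_{n}(t)=t^{\alpha-1}E_{\alpha,\alpha}(-\lambda_{n}t^{\alpha})$, integrate in time, and apply termwise the elementary rule that $\int_{0}^{t}(f*\eta)(s)\,ds=(F*\eta)(t)$ whenever $F(t)=\int_{0}^{t}f(\tau)\,d\tau$. Since the antiderivative of $k_{n}$ is precisely $K_{n}$, summing yields $\int_{0}^{t}u(x,s)\,ds=(K(x,\cdot)*\eta)(t)$, with the interchange of summation and integration justified by the uniform convergence established above.

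I expect the main obstacle to be the analyticity claim rather than the convergence: passing from the purely real estimate $0\le K_{n}(t)\le\lambda_{n}^{-1}$ to a genuinely complex, sector-uniform bound requires the sectorial asymptotic expansion of $E_{\alpha,1}(z)$ for $|\arg z|\ge\mu$ in place of the real monotonicity estimate, and one must track the branch of $t^{\alpha}$ along the way. A secondary point to handle carefully is the zero eigenvalue $\lambda_{0}=0$ (which does occur, e.g.\ for $q\equiv h\equiv H\equiv 0$), whose contribution is the only obstruction to the $L^{\infty}(0,\infty)$ statement; the remaining steps are routine once the Mittag-Leffler identity and the $O(n^{-2})$ domination are in hand.
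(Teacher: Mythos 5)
Your proposal cannot be matched against an in-paper argument, because the paper does not prove this lemma at all: it states Lemmas \ref{lem-sol} and \ref{lem-K} and then says only ``The proofs are found in \cite{Rundell2021uniqueness}.'' That said, your route is the standard one and is visibly the computation the paper relies on downstream: your identity $\int_0^t s^{\alpha-1}E_{\alpha,\alpha}(-\lambda_n s^\alpha)\,ds=\lambda_n^{-1}\bigl(1-E_{\alpha,1}(-\lambda_n t^\alpha)\bigr)$ is exactly what converts $K_1(x_0,\cdot)=K_2(x_0,\cdot)$ into \eqref{obser-series} in the proof of Theorem \ref{thm-unique}, and your ingredients (complete monotonicity $0\le E_{\alpha,1}(-x)\le 1$, uniform boundedness of $e_n$ via \eqref{asymp-phi}, $\lambda_n\sim n^2\pi^2$, and the associativity $1*(k_n*\eta)=(1*k_n)*\eta=K_n*\eta$) are precisely what the M-test and the convolution identity require. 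So the skeleton of your proof is right, and your point about the zero eigenvalue is a genuine observation that cuts against the lemma as stated rather than against your argument: for $(q,h,H)=(0,0,0)\in\mathcal A$ --- exactly the configuration of the paper's own Example --- one has $\lambda_0=0$ and $e_0\equiv 1$, so $K(x,t)$ contains the term $t^\alpha/\Gamma(1+\alpha)$ and $K(x,\cdot)\notin L^\infty(0,\infty)$. The lemma therefore implicitly assumes $\lambda_0>0$, an assumption the paper also uses silently later (it divides by $\lambda_n$ and asserts $\lambda_{j,n}>0$ in the proof of Theorem \ref{thm-unique}).

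Two steps of yours are sketches that need completing, though both close with tools you already name. For analyticity, the real-axis bound is indeed useless off $(0,\infty)$; what closes the argument is the sectorial estimate $|E_{\alpha,1}(z)|\le C(1+|z|)^{-1}$ valid for $\mu\le|\arg z|\le\pi$ with any fixed $\mu\in(\alpha\pi/2,\pi)$ (Theorem 1.6 in \cite{Podlubny1998fractional}): if $|\arg t|\le\theta$ with $\theta<\pi/\alpha-\pi/2$, then $|\arg(-\lambda_nt^\alpha)|=\pi-\alpha|\arg t|>\alpha\pi/2$, so $|K_n(t)|\le C\lambda_n^{-1}$ uniformly in $n$ on such closed subsectors, and Weierstrass's theorem applies; this should be written out, since the admissible sector shrinks as $\alpha\to1$. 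For the final interchange, uniform convergence of the $K$-series is not literally what justifies termwise time-integration of the series for $u$; either pass to partial sums (for which $\int_0^t u_N(x,s)\,ds=(K_N(x,\cdot)*\eta)(t)$ is exact) and use Young's inequality $\|k_n*\eta\|_{L^2(0,T)}\le\|k_n\|_{L^1(0,T)}\|\eta\|_{L^2(0,T)}\le\lambda_n^{-1}\|\eta\|_{L^2(0,T)}$ to see that the series for $u(x,\cdot)$ converges absolutely in $L^2(0,T)$ for each fixed $x$, or argue in $L^2$ and upgrade to every $x$ by continuity. These are completions, not corrections.
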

The proofs are found in \cite{Rundell2021uniqueness}.
%%------------------------------------------------------------------------------------------------------------------------------
\section{Proof of Theorems 1 and 2}

\begin{proof}[\bf Proof of Theorem \ref{thm-unique}]
Let $\varphi_j(x;\lambda_n)$ and $\psi_j(x;\lambda_n)$ be the solutions
of \eqref{eigen-h} and \eqref{eigen-H} with $q_j$, $j=1,2$ respectively.
By $u_i( x,t)$ we denote the solutions of \eqref{eq-gov'} with $q_j$, $j=1,2$.
In terms of the observation assumption, we see
$$
u_1(x_0, \cdot) = u_2(x_0, \cdot) \mbox{ in }(0,T).
$$
Therefore,
$$
\int _0^t u_1(x_0,s)ds = \int _0^t u_2(x_0,s)ds,  \quad 0<t<T,
$$
Hence, Lemma \ref{lem-K} implies that
$$
\left(K_1(x_0, \cdot) * \eta\right)(t) = \left(K_2(x_0, \cdot) * \eta
\right)(t), \quad 0<t<T.
$$
Thus
$$
(K_1-K_2)(x_0, \cdot) * \eta = 0 \quad \mbox{in }(0,T).
$$
Since $\eta(t) \not \equiv 0$, the Titchmarsh convolution theorem (see e.g.,
\cite{Titchmarsh1926zeros}) implies the existence of $t_0>0$ such that
$K_1(x_0, t) = K_2(x_0, t)$,  $t\in(0, t_0)$. Now we conclude from the
$t$-analytic of $K_i(x_0, t)$, $i=1, 2$, in Lemma \ref{lem-K} that
$$
K_1(x_0, t) = K_2(x_0, t),  \quad  t>0,
$$
that is
\begin{equation}\label{obser-series}
\begin{aligned}
&\sum_{n=0}^\infty \frac{e_n(x_0;q_1)e_n(1;q_1)}{\lambda_n(q_1)}
\Big(1 - E_{\alpha,1}(-{\lambda_n(q_1)} t^\alpha)\Big)\\
=&\sum_{n=0}^\infty \frac{e_n(x_0;q_2)e_n(1;q_2)}{\lambda_n(q_2)}
\Big(1 - E_{\alpha,1}(-{\lambda_n(q_2)} t^\alpha)\Big).  %\eqno{(3.2)}
\end{aligned}
\end{equation}
Henceforth, we also use $\lambda_{j,n}:=\lambda_n(q_j)$ and $\varphi_{j,n}
:=\varphi_n(x;q_j)$, $j=1,2$ for convenience.
By the asymptotic behaviour of the Mittag-Leffler function $E_{\alpha,1}(z)$
(see e.g., \cite{Podlubny1998fractional}), we can obtain
\begin{equation}\label{asymp-ml}
E_{\alpha,1}(-\lambda_{j,n} t^\alpha) = \frac{1}{\Gamma(1-\alpha)}
\frac{1}{\lambda_{j,n} t^\alpha} + O\left(\frac{1}{\lambda_{j,n}^2t^{2\alpha}}
\right), j = 1,2,\quad \mbox{as }t\to\infty.
\end{equation}
Now, substituting the expressions \eqref{asymp-ml} into \eqref{obser-series},
we obtain
\begin{equation}\label{obser-asymt}
\begin{aligned}
&\sum_{n=0}^{\infty} \frac{e_n(x_0;q_1)e_n(1;q_1)}{\lambda_{1,n}}
- \frac{1}{\Gamma(1-\alpha)} \sum_{n=0}^\infty
\frac{e_n(x_0;q_1)e_n(1;q_1)}{\lambda^2_{1,n}} \frac{1}{t^\alpha}
+ O\left(\frac{1}{t^{2\alpha}}\right) \\
 =& \sum_{n=0}^\infty \frac{e_n(x_0;q_2)e_n(1;q_2)}{\lambda_{2,n}}
- \frac{1}{\Gamma(1-\alpha)} \sum_{n=0}^\infty
\frac{e_n(x_0;q_2)e_n(1;q_2)}{\lambda^2_{2,n}} \frac{1}{t^\alpha}
+ O\left(\frac{1}{t^{2\alpha}}\right),\quad \mbox{as }t\to\infty.
\end{aligned}
\end{equation}

Letting $t\rightarrow \infty$ in the above equation \eqref{obser-asymt},
we reach
\begin{equation}\label{obser-phi}
\sum_{n=0}^\infty \frac{e_n(x_0;q_1)e_n(1;q_1)}{\lambda_{1,n}}
= \sum_{n=0}^\infty \frac{e_n(x_0;q_2)e_n(1;q_2)}{\lambda_{2,n}}.
\end{equation}
Taking \eqref{obser-phi} into \eqref{obser-series},
then \eqref{obser-series} can be rephrased as follows
\begin{equation}\label{obser-series'}
\sum_{n=0}^\infty\frac{e_n(x_0;q_1)e_n(1;q_1)}{\lambda_{1,n}}
E_{\alpha,1}(-{\lambda_{1,n}}t^\alpha)
= \sum_{n=0}^\infty\frac{e_n(x_0;q_2)e_n(1;q_2)}E_{\alpha,1}
(-{\lambda_{2,n}}t^\alpha).
\end{equation}
Here we can argue similarly to \cite{Cheng2009uniqueness,Jin2012inverse} to
deduce that the series in \eqref{obser-series'} converge uniformly for
$t\in(0,\infty)$. Therefore, taking the Laplace transform term-wisely
on both sides of \eqref{obser-series'} gives
$$
\sum_{n=0}^\infty\frac{e_n(x_0;q_1)e_n(1;q_1)}{\lambda_{1,n}}
\frac{\zeta^{\alpha-1}}{\zeta^\alpha + \lambda_{1,n}}
= \sum_{n=0}^\infty\frac{e_n(x_0;q_2)e_n(1;q_2)}{\lambda_{2,n}}
\frac{\zeta^{\alpha-1}}{\zeta^\alpha + \lambda_{2,n}},\quad \Re \zeta>0,
$$
from which we further divide both sides of the above equation by
$\zeta^{\alpha-1}$ implies
\begin{equation}\label{obser-lap}
\ \sum_{n=0}^{\infty}\frac{e_n(x_0;q_1)e_n(1;q_1)}{\lambda_{1,n}}
\frac{1}{\zeta + \lambda_{1,n}}
= \ \sum_{n=0}^{\infty}\frac{e_n(x_0;q_2)e_n(1;q_2)}{\lambda_{2,n}}
\frac{1}{\zeta + \lambda_{2,n}}.
\end{equation}
By the asymptotic expansions of the eigenfunctions in \eqref{asymp-phi} and
the eigenvalues $\lambda_n = O(n)$, we can analytically continue both sides of
\eqref{obser-lap} in $\zeta$ when the above series are convergent, so that
\eqref{obser-lap} holds true for $\zeta\in \mathbb{C}\setminus
( \{-\lambda_{1,n}\}_{n=0}^{\infty} \cup \{-\lambda_{2,n}\}_{n=0}^{\infty})$.
Noting $\phi_{j, n}(1) \neq 0$ and $\lambda_{j, n}>0$, $j=1,2$, we can assert that there exists $m(n) \in \mathbb{N} \cup \{0\}$ such that
\begin{equation}\label{eq-eigen}
\lambda_{1,n} = \lambda_{2,m(n)}
\end{equation}
in the case of $\phi_{1,n}(x_{0}) \neq 0$. If not, assume that for any
$n_0\in \mathbb{N} \cup \{0\}$, the inequality $\lambda_{1, n_0} \neq
\lambda_{2,m}$ holds for all $m \in \mathbb{N} \cup \{0\}$.
Then we can choose a circle which includes $-\lambda_{1,n_0}$ and excludes
$\{ -\lambda_{1, k}\}_{k \neq n_0} \cup \{ -\lambda_{2, k}\}_{k \geq 0}$.
Integrating \eqref{obser-lap} along the circle
and applying the Cauchy theorem, we obtain
$$
\frac{2\pi i e_{n_0}(x_0;q_1)e_{n_0}(1;q_1)}{\lambda_{1,n}}=0,
$$
which is impossible by $\phi_{j, n}(1) \neq 0$, $\phi_{1,n}(x_{0}) \neq 0$
and $\lambda_n>0$, $n \in \mathbb N \cup \{0\}$.
Thus we obtain \eqref{eq-eigen} since $n_0$ is arbitrary.
Furthermore, from \eqref{obser-lap} and \eqref{eq-eigen}, by repeating
the above argument, we can see that
\begin{equation}\label{eq-phi12}
e_n(1;q_1)e_n(x_{0};q_1) = e_{m(n)}(1;q_2)e_{m(n)}(x_{0};q_2).
\end{equation}

Next, we will prove Theorem \ref{thm-unique} separately in the following two
cases:

{\bf Case (i)}: $x_0 \geq d$.

By means of the estimates in Lemma \ref{lem-F}, we conclude from Proposition
B.6 in \cite{F2000inverse} that $F \equiv 0$, with which \eqref{eq-F} yields
$$
m_{1,-}(d,s) = m_{2,-}(d,s), \quad \quad s\in \mathbb C.
$$
Therefore, in view of the assumption $q_1 = q_2$ in $[d, 1]$ and Marchenko's
uniqueness theorem (see e.g., Theorem 2.1 in \cite{GW2015}),
we obtain $h_1=h_2$ and $q_1 = q_2$ in $[0,1]$. The proof in Case (i)
is completed.

\vspace{0.6em}

{\bf Case (ii)}: $0 \le x_{0} < \min\{ d, -2d+1\}$ and
$0<d<\frac{1}{2}$.

In this case, we can directly see that $0 < x_0 < \frac{1}{2}$.
According to \eqref{esti-N}, it follows that
$$
 N_\Lambda(s) \geq (1-x_{0}) \frac{s^{\frac{1}{2}}}{\pi}
$$
for sufficiently large $s>0$. Then from Theorem 1.3 in \cite{F2000inverse} and
the assumption $q_{1}=q_2$ on $[d,1]$, we can derive
$h_1=h_2$ and $q_1 = q_2$ in $[0,1]$. The proof of Cases (ii) is completed.
Thus, we complete the proof of Theorem \ref{thm-unique}.
\end{proof}

\begin{proof}[\bf Proof of Theorem 2]
The proof is similar to the proof of Theorem \ref{thm-unique}. In the same way as Theorem \ref{thm-unique} for obtaining \eqref{eq-eigen} and \eqref{eq-phi12}, it can easily be seen from \eqref{ineq-Lambda} that
$$
N_\Lambda (s) \geq A N_{\sigma(L(q_1))}(s) + B \geq  2d N_{\sigma(L(q_1))}(s) + \frac12 - d.
$$
for large enough $s>0$, where $A \geq 2d$ and $B \geq \frac12-d$. Then from Theorem 1.3 in \cite{F2000inverse}, we have $q_1 = q_2$ in $[0, 1]$ and $h_1=h_2$.
Thus the proof of Theorem 2 is completed.
\end{proof}
%%------------------------------------------------------------------------------------------------------------------------------
\section{Conclusions}

In this paper, we investigated an inverse coefficient problem
in a one dimensional time fractional diffusion equation with nonhomogeneous
Robin boundary conditions. Based on the properties of the solution for the
forward problem, Laplace transforms, and inverse spectral theory of the
spatial operator, we proved the simultaneous uniqueness of determining a
spatially varying potential and one Robin coefficient by using the additional
interior measurement.

It should be noted that when the boundary conditions are other cases in
\eqref{eq-gov'}, the corresponding uniqueness results in Theorems 1 and 2
remain valid.

We conclude this article with a possible future topic on the inverse
problem for model \eqref{eq-gov'} from the interior measurement. It is well
known that the fractional-order $\alpha$ is a significant parameter for the
time-fractional diffusion equation. However, the unique determination of the
order $\alpha$ is not obtained in this work. Thus, a new approach needs to be
established, and the simultaneous uniqueness of the fractional-order $\alpha$,
potential $q(x)$, and Robin coefficient $h$ is a future topic.
%%------------------------------------------------------------------------------------------------------------------------------
\section*{Acknowledgments}
The work of first author is supported by the National Natural Science
Foundation of China, (no. 12301534).
The second author thanks National Natural Science Foundation of China (no. 12271277) and Ningbo Youth Leading Talent Project (no. 2024QL045). This work is partly supported by the Open Research Fund of Key Laboratory of Nonlinear Analysis \& Applications (Central China Normal University), Ministry of Education, China.
The work of third author is supported by Grant-in-Aid for Scientific Research
(A) 20H00117 and Grant-in-Aid for Challenging Research (Pioneering) 21K18142
of Japan Society for the Promotion of Science.

%------------------------------------------------------------------------------------------------------------------------------

\end{document}